\documentclass[review]{elsarticle}

\usepackage{lineno,hyperref}
\modulolinenumbers[5]
\journal{Information and Computation}
\bibliographystyle{elsarticle-num}

\usepackage{graphicx}
\usepackage{proof,amsmath,amsthm,amssymb,mathrsfs} %,amssymb,amsmath,amsthm}

\newcommand{\BS}{\mathop{\backslash}}
\newcommand{\SL}{\mathop{/}}
\newcommand{\imp}{\multimap}
\newcommand{\Tensor}{\otimes}

\newcommand{\Th}{\mathrm{Th}}

\newcommand{\LL}{\mathbf{L}}
\newcommand{\LLs}{\mathbf{L}^{\boldsymbol{*}}}
\newcommand{\MALC}{\mathbf{MALC}}
\newcommand{\MALCs}{\mathbf{MALC}^{\boldsymbol{*}}}
\newcommand{\MALCd}{\mathbf{MALC}+\mathcal{D}}
\newcommand{\IAL}{\mathbf{IAL}}
\newcommand{\ILL}{\mathbf{ILL}}
\newcommand{\AMALC}{\mathbf{AMALC}^{\boldsymbol{*}}}
\newcommand{\One}{\mathbf{1}}
\newcommand{\LU}{\LL_\One}

\newcommand{\cBS}{\mathop{\mbox{\raisebox{5pt}{\rotatebox{-60}{${\sim}$}}}}}

\newcommand{\Mf}{\mathfrak{M}}

\newcommand{\Var}{\mathcal{V}}

\newcommand{\Lc}{\mathcal{L}}
\newcommand{\LLe}{\mathbf{L}^{+\varepsilon}}

\newtheorem{theorem}{Theorem}
\newtheorem{lemma}[theorem]{Lemma}
\newtheorem{corollary}[theorem]{Corollary}
\newtheorem{proposition}[theorem]{Proposition}
\theoremstyle{definition}
\newtheorem{definition}{Definition}

\begin{document}

\begin{frontmatter}

%\title{Language Models for the Lambek Calculus: Issues with Extensions}
\title{Language Models for Some Extensions of the Lambek Calculus}
\author[ucl,hse]{Max Kanovich}
\author[mian,hse]{Stepan Kuznetsov}
\author[penn,hse]{Andre Scedrov}

\address[ucl]{University College London}
\address[mian]{Steklov Mathematical Institute of RAS}
\address[penn]{University of Pennsylvania}
\address[hse]{National Research University Higher School of Economics}

\begin{abstract}
 We investigate language interpretations of two extensions of the Lambek calculus: with additive conjunction and disjunction and with additive conjunction and the unit constant. For extensions with additive connectives, we show that conjunction and disjunction behave differently. Adding both of them leads to incompleteness due to the distributivity law. We show that with conjunction only no issues with distributivity arise. In contrast, there exists a corollary of the distributivity law in the language with disjunction only which is not derivable in the non-distributive system. Moreover, this difference keeps valid for systems with permutation and/or weakening structural rules, that is, intuitionistic linear and affine logics and affine multiplicative-additive Lambek calculus. For the extension of the Lambek with the unit constant, we present a calculus which reflects  natural algebraic properties of the empty word. We do not claim completeness for this calculus, but we prove undecidability for the whole range of systems extending this minimal calculus and sound w.r.t.\ language models. As a corollary, we show that in the language with the unit there exissts a sequent that is true if all variables are interpreted by regular language, but not true in language models in general.
\end{abstract}

\begin{keyword}
 Lambek calculus \sep
language models \sep
relational models \sep
distributive law \sep
incompleteness \sep
undecidability
\end{keyword}

\end{frontmatter}

\section{Introduction}

The Lambek calculus was introduced by Joachim Lambek~\cite{Lambek58} for mathematical modelling of natural language syntax.  This suggests the natural interpretation of the Lambek calculus as the algebraic logic of operations on formal languages. Such interpretations of the Lambek calculus are called {\em language models,} or {\em L-models} for short. 

The Lambek calculus, as originally formulated by Lambek, includes three operations: $\cdot$ (product), $\BS$ (left division), and $\SL$ (right division). A distinctive feature of the Lambek calculus is the so-called
{\em Lambek's non-emptiness restriction.} In terms of L-models, this means that the empty word is disallowed, and we consider, for a given alphabet $\Sigma$, subsets of $\Sigma^+$.
Lambek operations on languages are defined as follows:
\begin{align*}
 & A \cdot B = \{ uv \mid u \in A, v \in B \}, \\
 & A \BS B = \{ u \in \Sigma^+ \mid (\forall v \in A) \: 
 vu \in B \}, \\
 & B \SL A = \{ u \in \Sigma^+ \mid (\forall v \in A) \: 
 uv \in B \}.
\end{align*}

The division operations, $\BS$ and $\SL$, are indeed {\em residuals} of the product w.r.t.\ the subset relation:
$$
B \subseteq  A \BS C \iff
A \cdot B \subseteq C \iff 
A \subseteq C \SL B.
$$
These equivalences form the core of the Lambek calculus. 
Along with transitivity ($A \subseteq B \subseteq C \Rightarrow A \subseteq C$), reflexivity ($A \subseteq A$), and associativity ($A \cdot (B \cdot C) = (A \cdot B) \cdot C$), they form a complete axiomatization of all generally true atomic statements about Lambek operations on formal languages. This axiomatization is the Lambek calculus in its non-sequential form. 

The sequential formulation of the Lambek calculus~\cite{Lambek58} is as follows. Formulae are constructed from variables ($p,q,r,\ldots$) using three connectives: $\cdot$, $\BS$, $\SL$. (We use capital Latin letters both for languages and for Lambek formulae.) Sequents are expressions of the form $\Gamma \vdash C$, where the antecedent $\Gamma$ is a sequence of formulae and the succedent $C$ is one formula (intuitionistic style). 
The calculus $\LL$ includes axioms of the form $A \vdash A$ and the following  rules of inference:

$$
\infer[\BS L]{\Gamma, \Pi, A \BS B, \Delta \vdash C}
{\Pi \vdash A & \Gamma, B, \Delta \vdash C}
\qquad
\infer[\BS R;\mbox{ $\Pi$ is non-empty}]{\Pi \vdash A \BS B}
{A, \Pi \vdash B}
$$
$$
\infer[\SL L]{\Gamma, B \SL A, \Pi, \Delta \vdash C}
{\Pi \vdash A & \Gamma, B, \Delta \vdash C}
\qquad
\infer[\SL R;\mbox{ $\Pi$ is non-empty}]{\Pi \vdash B \SL A}
{\Pi, A \vdash B}
$$
$$
\infer[\cdot L]
{\Gamma, A \cdot B, \Delta \vdash C}{\Gamma, A, B, \Delta \vdash C}
\qquad
\infer[\cdot R]
{\Gamma, \Delta \vdash A \cdot B}
{\Gamma \vdash A & \Delta \vdash B}
$$
$$
\infer[Cut]
{\Gamma, \Pi, \Delta \vdash C}{\Pi \vdash A &
\Gamma, A, \Delta \vdash C}
$$
The cut rule is eliminable~\cite{Lambek58}.

An L-model, formally, is a mapping $w$ of Lambek formulae to subsets of $\Sigma^+$ (languages without the empty word), which commutes with Lambek operations: $w(A \cdot B) = w(A) \cdot w(B)$, $w(A \BS B) = w(A) \BS w(B)$, and $w(B \SL A) = w(B) \SL w(A)$. A sequent $A_1, \ldots, A_n \vdash B$ is true in this model, if $w(A_1) \cdot \ldots \cdot w(A_n) \subseteq w(B)$.

According to Lambek's non-emptiness restriction, all sequents in derivations are required to have non-empty antecedents. This constraint is motivated by linguistic applications: without it, Lambek categorial grammars generate ungrammatical sentences~\cite[\S\,2.5]{MootRetore}.

%Completeness of $\LL$ w.r.t.\ L-models was proved by %Pentus~\cite{PentusAPAL}, and is highly non-trivial. 

Abolishing Lambek's restriction---that is, removing constraints ``$\Pi$ is non-empty'' on $\BS R$ and $\SL R$---yields {\em the Lambek calculus allowing empty antecedents,} denoted by $\LLs$~\cite{Lambek61}.
Language models are easily adapted for the case of $\LLs$: now we consider languages, which are subsets of $\Sigma^*$ (that is, they are allowed to include the empty word $\varepsilon$). The definition of division operations is also modified: for models of $\LLs$, 
\begin{align*}
 & A \BS B = \{ u \in \Sigma^* \mid (\forall v \in A) \, vu \in B \},\\
 & B \SL A = \{ u \in \Sigma^* \mid (\forall v \in A) \, uv \in B \}.
\end{align*}
This modification can alter the values of $A \BS B$ and $B \SL A$ even if $A$ and $B$ do not contain the empty word. For example, $A \BS A$ now always includes $\varepsilon$, and therefore $(A \BS A) \BS B$ is always a subset of $B$. Hence, $\LLs$ is not a conservative extension of $\LL$: the sequent $(p \BS p) \BS q \vdash q$ has a non-empty antecedent, but is derivable only in $\LLs$, not in $\LL$. For these modified L-models, let us use the term {\em L$\varepsilon$-models.}

In an L$\varepsilon$-model $w$, a sequent of the form $A_1, \ldots, A_n \vdash B$ is true if $w(A_1) \cdot \ldots \cdot w(A_n) \subseteq w(B)$, and a sequent of the form $\vdash B$, with an empty antecedent, is true if $\varepsilon \in w(B)$.

Completeness theorems for $\LL$ and $\LLs$ w.r.t.\ corresponding versions of L-models were  proved by Pentus~\cite{PentusAPAL,PentusFmonov}. Pentus' proofs are 
highly non-trivial. If one considers the fragment without $\cdot$ (the {\em product-free} fragment), however, proving
L-completeness becomes much easier. This was done by Buszkowski~\cite{Buszko1982}; Buszkowski's proof applies both to $\LL$ and $\LLs$, w.r.t.\ L-models and L$\varepsilon$-models, respectively.

Besides product and two divisions, natural operations on formal languages include set-theoretic intersection and union. These operations correspond to so-called {\em additive} conjunction and disjunction. Additive operations are usually axiomatized by the following inference rules (cf.~\cite{KanazawaJoLLI}):
%-> product-free -> MALC -> L*, MALC* (without Lambek's restriction) -> ILL and IAL
%
%
%$\MALC$:
$$
\infer[\vee L]
{\Gamma, A \vee B, \Delta \vdash C}
{\Gamma, A, \Delta \vdash C & \Gamma, B, \Delta \vdash C}
\qquad
\infer[\vee R_l]
{\Pi \vdash A \vee B}{\Pi \vdash A}
\qquad
\infer[\vee R_r]
{\Pi \vdash A \vee B}{\Pi \vdash B}
$$
$$
\infer[\wedge L_l]
{\Gamma, A \wedge B, \Delta \vdash C}
{\Gamma, A, \Delta \vdash C}
\qquad
\infer[\wedge L_r]
{\Gamma, A \wedge B, \Delta \vdash C}
{\Gamma, B, \Delta \vdash C}
\qquad
\infer[\wedge R]
{\Pi \vdash A \wedge B}
{\Pi \vdash A & \Pi \vdash B}
$$
The Lambek calculus $\LL$ extended with these rules is denoted by $\MALC$ ({\em mul\-ti\-pli\-ca\-ti\-ve-additive Lambek calculus}); $\MALCs$ is the variant of $\MALC$ without Lambek's restriction (that is, allowing empty antecedents).
L-completeness, however, fails for $\MALC$ in general. Further, in Section~\ref{S:distr}, we discuss this issue in detail.

Following Abrusci~\cite{Abrusci}, we put the Lambek calculus into a broader context of linear logic. 
Namely, $\MALCs$ can be viewed as a fragment of intuitionistic {\em non-commutative} linear logic. (This fragment includes multiplicative and additive operations, but lacks the exponential and constants.) We also consider commutative systems: intuitionistic linear logic $\ILL$ and intuitionistic affine logic $\IAL$.

Calculi $\ILL$ and $\IAL$ can be obtained from $\MALCs$ by adding structural rules: permutation for $\ILL$ and permutation and weakening for $\IAL$. 
In the language of $\MALC$, the rules of permutation and weakening are formulated as follows:
$$
\infer[P]{\Gamma, A, B, \Delta \vdash C}{\Gamma, B, A, \Delta \vdash C}
\qquad
\infer[W]{\Gamma, A, \Delta \vdash C}{\Gamma, \Delta \vdash C}
$$
Adding only weakening yields non-commutative intuitionistic affine logic, or affine (monotone) multiplicative-additive Lambek calculus. We denote this system by $\AMALC$ (in the presence of extra structural rules, we do not impose Lambek's restriction).

We shall also use alternative calculi for the commutative systems $\ILL$ and $\IAL$, in which structural rules are hidden in axioms and in the format of sequents. 
First, we change the language of formulae, introducing one connective $A \imp B$ instead of $A \BS B$ and $B \SL A$ (these are equivalent in $\ILL$ and $\IAL$). We also write $A \otimes B$ instead of $A \cdot B$, following Girard's~\cite{Girard} linear logic notations.

Sequents are now going to be expressions of the form $\Gamma \vdash C$, where $\Gamma$ is a {\em multiset} of formulae. Further $\Gamma, A$ means $\Gamma \uplus \{ A \}$, and $\Gamma, \Pi$ means $\Gamma \uplus \Pi$, where $\uplus$ is multiset union.

Axioms are of the form $p \vdash p$, for each variable $p$, in the case of $\ILL$, and of the form $\Gamma, p \vdash p$ for $\IAL$. Inference rules for both systems are as follows:
$$
 \infer[\imp L]{\Gamma, \Pi, A \imp B \vdash C}
 {\Pi \vdash A & \Gamma, B \vdash C}
 \qquad 
 \infer[\imp R]{\Pi \vdash A \imp B}{\Pi, A \vdash B}
 $$
 $$
 \infer[\Tensor L]{\Gamma, A \Tensor B \vdash C}{\Gamma, A, B \vdash C}
 \qquad
 \infer[\Tensor R]{\Gamma, \Delta \vdash A \Tensor B}
 {\Gamma \vdash A & \Delta \vdash B}
 $$
 $$
 \infer[\vee L]{\Gamma, A \vee B \vdash C}{\Gamma, A \vdash C &
 \Gamma, B \vdash C}
 \qquad
 \infer[\vee R_l]{\Pi \vdash A \vee B}{\Pi \vdash A}
 \qquad
 \infer[\vee R_r]{\Pi \vdash A \vee B}{\Pi \vdash B}
 $$
 $$
 \infer[\wedge L_l]{\Gamma, A \vee B \vdash C}{\Gamma, A \vdash C}
 \qquad
 \infer[\wedge L_r]{\Gamma, A \vee B \vdash C}{\Gamma, B \vdash C}
 \qquad 
 \infer[\wedge R]{\Pi \vdash A \wedge B}{\Pi \wedge A & \Pi \wedge B}
 $$
 
 For $\IAL$, the weakening rule is not officially included in the system, but is admissible:
 $$
 \infer[W]{\Gamma, A \vdash C}{\Gamma \vdash C}
 $$
 (it is hidden in axioms). 
 
 The cut rule of the following form is admissible both in $\ILL$ and $\IAL$:
 $$
 \infer[Cut]{\Gamma, \Pi \vdash C}{\Pi \vdash A & \Gamma, A \vdash C}
 $$
 This is shown by a standard inductive argument.
 
Finally, let us introduce the {\em multiplicative unit constant,} $\One$. The unit constant is added to systems without Lambek's restriction extending $\LLs$ ({\em i.e.,} $\LLs$ itself, $\MALCs$, $\AMALC$, $\ILL$, $\IAL$). 
The {\em Lambek calculus with the unit,} $\LU$~\cite{Lambek69}, is obtained from $\LLs$ by adding one axiom, $\vdash \One$ (its antecedent is empty), and one inference rule,
$$
\infer[\One L]{\Gamma, \One, \Delta \vdash C}
{\Gamma, \Delta \vdash C}
$$
L-completeness, however, does not hold for $\LU$. Indeed, 
since $\One$ should be the unit w.r.t.\ $\cdot$, that is $A \cdot \One = A = \One \cdot A$ for any $A$, in L$\varepsilon$-models it should be interpreted as $\{ \varepsilon \}$. The following sequent is a counter-example for L-completeness: 
$\One \SL p, \One \SL p \vdash \One \SL p$. This sequent is true in all models for any interpretation of $p$, but is not derivable in $\LU$.

Throughout this paper, we shall frequently consider fragments of the calculi defined above in languages with restricted sets of connectives. Such a fragment will be denoted by the name of the calculus, followed by the list of connectives in parentheses: e.g., $\MALC(\BS,\SL,\wedge)$.

%\vskip 5pt

\section{Distributivity Law in Fragments with One Additive}\label{S:distr}

It is well known, that $\MALC$ is incomplete w.r.t.\ L-models. The reason is the distributivity
principle, 
$$
(A \vee C) \wedge (B \vee C) \vdash (A \wedge B) \vee C.
\eqno{(\mathcal{D})}
$$
On one hand, this principle is obviously true in all L-models. On the other hand, as noticed by Ono and Komori~\cite{OnoKomori}, one needs the structural rules of contraction and weakening to derive it. In particular, the distributivity principle is not derivable in $\MALC$,
$\MALCs$, $\AMALC$, $\ILL$, and $\ILL$.

The distributivity principle, as formulated above, includes {\em both} additive connectives, $\wedge$ and $\vee$. We investigate fragments of $\MALC$ with only one additive, $\wedge$ {\em or} $\vee$. The result of our study is that with respect to distributivity $\wedge$ and $\vee$ behave in opposite ways.

Let $\MALCd$ denote $\MALC$ with the distributivity principle added as an extra axiom scheme. 
In the presence of this axiom scheme, we have to keep cut as an official rule of the system (it is now not eliminable). 
A hypersequential system for $\MALCd$ was developed by Kozak~\cite{Kozak}.
%%
%% FIXME: talk about Cut and Gentzen (Kozak) 
%% CHECK IN THEIR PAPER: in the $\vee$-only 
%% fragment, looks like it would coincide with
%% MALC(\,/,\/) ??? WTF ???
%%
%% ANSWER: there is contraction for meta /\ !
%%

Let us restrict ourselves to the product-free language (with product, proving L-completeness is hard even without extra connections~\cite{PentusAPAL,PentusFmonov}). We also consider calculi without the unit constant: issues connected with $\One$ are discussed in Section~\ref{S:unit}. Thus, we consider two fragments of the multiplicative-additive Lambek calculus: $\MALC(\BS,\SL,\wedge)$ and $\MALC(\BS,\SL,\vee)$, and the corresponding fragments of bigger system up to $\IAL$. (For commutative calculi, we have only one implication, that is, consider fragments in the language of $\imp, \wedge$ and $\imp, \vee$.)

As shown by Buszkowski~\cite{Buszko1982}, $\MALC(\BS,\SL,\wedge)$ is complete w.r.t.\ L-models. This yields the following corollary: $\MALC(\BS,\SL,\wedge)$ is a conservative fragment of {\em both} $\MALC$ and $\MALCd$. Indeed, any sequent provable in $\MALCd$ is true in all L-models; if it is in the language of $\BS,\SL,\wedge$, it is derivable in $\MALC(\BS,\SL,\wedge)$ by L-completeness.
In other words, the distributivity principle has no non-trivial corollaries in the language of $\BS,\SL,\wedge$.

The situation with $\MALC(\BS,\SL,\vee)$ is opposite. Namely, we present a corollary of the distributivity principle in the language of $\BS,\SL,\vee$, which is not provable in $\MALC(\BS,\SL,\vee)$. Thus, $\MALC(\BS,\SL,\vee)$ is not a conservative fragment of $\MALCd$, and is therefore incomplete w.r.t.\ L-models.
Moreover, we show that this effect is of a more general nature. Namely, the same holds for the corresponding fragments of $\MALCs$, $\AMALC$, $\ILL$, and $\IAL$: distributivity has no new corollaries in the language with $\wedge$, but has such in the language with $\vee$.
%Namely, it still holds if one abolishes Lambek's non-emptiness restriction and/or adds structural rules of permutation and weakening (one or both).

\subsection{Completeness with Additive Conjunction Only}

For the first series of results, concerning $\wedge$, we give a semantic proof. For each system, we consider a specific version of L-semantics. For $\MALC(\BS,\SL,\wedge)$ and $\MALCs(\BS,\SL,\wedge)$, these are L-models and L$\varepsilon$-models respectively. For other systems, let us first give some definitions and prove correctness statements for them.

\begin{definition}
 A language $A$ is called {\em monotone,} if for any word $u_1 u_2 \in A$ and an arbitrary word $w$ the word $u_1 w u_2$ also belongs to $A$.
\end{definition}

\begin{proposition}
If $A$ and $B$ are both monotone, then so are $A \BS B$, $B \SL A$, and $A \wedge B$.
\end{proposition}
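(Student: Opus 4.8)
The plan is to verify monotonicity for each of the three operations directly from the definitions; in every case the argument consists of taking a word in the operation's value, splitting it at the prescribed point, and applying monotonicity of one of the operands at a \emph{suitably chosen} factorization. The additive conjunction is the easy case: since $A \wedge B = A \cap B$, if $u_1 u_2 \in A \cap B$ then $u_1 u_2$ lies in both $A$ and $B$, so monotonicity gives $u_1 w u_2 \in A$ and $u_1 w u_2 \in B$, hence $u_1 w u_2 \in A \cap B$.

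For $A \BS B$ I would argue as follows. Suppose $u_1 u_2 \in A \BS B$ and let $w$ be an arbitrary word; I must show $u_1 w u_2 \in A \BS B$, i.e.\ $v (u_1 w u_2) \in B$ for every $v \in A$. Fix such a $v$. By definition of $A \BS B$ we have $v u_1 u_2 \in B$. Now read this word with the factorization $v u_1 u_2 = (v u_1) \cdot u_2$: applying monotonicity of $B$ at this split point inserts $w$ there and yields $(v u_1)\, w\, u_2 = v u_1 w u_2 \in B$, which is exactly what is needed. The case of $B \SL A$ is completely symmetric: from $u_1 u_2 \in B \SL A$ and $v \in A$ we get $u_1 u_2 v \in B$, we now use the factorization $u_1 u_2 v = u_1 \cdot (u_2 v)$, and monotonicity of $B$ gives $u_1 w u_2 v \in B$, so $u_1 w u_2 \in B \SL A$.

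There is essentially no hard step here — the proposition is a routine unfolding of the definitions. The only point worth a moment's care is that the monotonicity hypothesis is applied with the insertion point placed inside $(v u_1)$-vs-$u_2$ (resp.\ $u_1$-vs-$(u_2 v)$) rather than at the original split $u_1$-vs-$u_2$, and that the definition of monotone language already permits the inserted word to sit at either end of a word (the degenerate cases $u_1 = \varepsilon$ or $u_2 = \varepsilon$), so no separate treatment of empty factors is required. One may also note that for the two division operations only monotonicity of $B$ is actually used; monotonicity of $A$ is invoked solely in the conjunction case.
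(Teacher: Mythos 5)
Your proof is correct and follows essentially the same route as the paper's: for $A\BS B$ one fixes $v\in A$, notes $vu_1u_2\in B$, and applies monotonicity of $B$ at the split $(vu_1)\cdot u_2$ to insert $w$, with $B\SL A$ symmetric and $A\wedge B$ immediate from $\wedge$ being intersection. Your explicit remark about which factorization is used, and that only monotonicity of $B$ is needed in the division cases, just makes precise what the paper leaves implicit.
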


%%
%% FIXME cf. Ryzhkova diploma
%%

\begin{proof}
 Let $u  = u_1 u_2 \in A \BS B$. Then for any $v \in A$ we have $v u_1 u_2 \in B$. Now take $u' = u_1 w u_2$ for an arbitrary $w$. By monotonicity of $B$, the word $vu' = v u_1 w u_2$ is also in $B$. Since this holds for any $v \in A$, we get $u' \in A \BS B$.
 The reasoning for $B \SL A$ is symmetric. The case of $A \wedge B$ is trivial.
\end{proof}

\begin{definition}
%A {\em commutative language} over an alphabet $\Sigma$ is a multiset of letters from $\Sigma$.
%
 A language $A$ is called {\em commutative,} if for any word $u = a_1 \ldots a_n$ belonging to $A$ and an arbitrary transposition $\sigma \in \mathbf{S}_n$ on $\{ 1, \ldots, n \}$ the word $a_{\sigma(1)} \ldots a_{\sigma(n)}$ also belongs to $A$.
\end{definition}

%Commutative languages, as defined above, are in one-to-one %correspondence with ordinary languages, closed under %transposition: for any word $u = a_1 \ldots a_n$ from such a %language and and an arbitrary transposition $\sigma$ on $\{ 1, %\ldots, n \}$ the word $a_{\sigma(1)} \ldots a_{\sigma(n)}$ %should also belong to the language.

Commutative languages are in one-to-one correspondence with multisets of letters from $\Sigma$. 
Thus, we can define the operation of {\em multiset union,} $A \uplus B$, for two commutative languages $A$ and $B$, which can be expressed as follows:
$$
A \uplus B = \{ a_{\sigma(1)} \ldots a_{\sigma(n)} \mid
\sigma \in \mathbf{S}_n \mbox{ and } 
a_1 \ldots a_n \in A \cdot B \}.
$$

If $A$ is a commutative language, then $vu \in A$ if and only if $uv \in A$. Therefore, for commutative $A$ and $B$, we have $A \BS B = B \SL A$; we denote this language by $A \imp B$.

\begin{proposition}
 If $A$ and $B$ are commutative, then so is $A \imp B$ and $A \wedge B$.
\end{proposition}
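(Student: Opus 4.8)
The plan is to reduce both claims to the definition of commutativity (closure under transpositions of adjacent or non-adjacent letters) together with the identity $A \imp B = A \BS B$ established just above for commutative $A$. So it suffices to prove that $A \BS B$ (equivalently $B \SL A$) and $A \wedge B$ are commutative whenever $A$ and $B$ are.

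For the division, I would take an arbitrary word $u = a_1 \ldots a_n \in A \BS B$ and an arbitrary transposition $\sigma \in \mathbf{S}_n$, and aim to show $a_{\sigma(1)} \ldots a_{\sigma(n)} \in A \BS B$. Unfolding the definition of $\BS$, this amounts to showing that $v\, a_{\sigma(1)} \ldots a_{\sigma(n)} \in B$ for every $v \in A$. Fix such a $v = b_1 \ldots b_m$. Since $u \in A \BS B$, we have $vu = b_1 \ldots b_m a_1 \ldots a_n \in B$. The key observation is that $v\, a_{\sigma(1)} \ldots a_{\sigma(n)}$ is obtained from $vu$ by transposing two positions both lying in the suffix occupied by $u$; that is, it equals $c_{\tau(1)} \ldots c_{\tau(m+n)}$ where $c_1 \ldots c_{m+n} = vu$ and $\tau \in \mathbf{S}_{m+n}$ is the transposition that acts as $\sigma$ on the last $n$ coordinates and fixes the first $m$. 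Since $B$ is commutative, $v\, a_{\sigma(1)} \ldots a_{\sigma(n)} \in B$, as required; hence $a_{\sigma(1)} \ldots a_{\sigma(n)} \in A \BS B = A \imp B$. The argument for $B \SL A$ is symmetric, and in any case $A \BS B = B \SL A$ here by commutativity of $A$.

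For $A \wedge B$ the claim is immediate, since $A \wedge B = A \cap B$ and the intersection of two languages closed under letter transpositions is again closed under letter transpositions.

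I do not expect any genuine obstacle in this proposition; the only points requiring a little care are recalling that $A \imp B$ is literally $A \BS B$ for commutative $A$ (so that the $\BS$-definition is available), and noting that a transposition acting on the last $n$ positions of a word of length $m+n$ is still a single transposition, so that the hypothesis ``closed under transpositions'' can be applied directly without first upgrading it to ``closed under all permutations''. The same reasoning works verbatim whether the division operations are taken over $\Sigma^*$ or over $\Sigma^+$, since permuting letters preserves word length and, in particular, preserves non-emptiness.
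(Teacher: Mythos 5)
Your proof is correct and follows essentially the same route as the paper's: extend the given transposition $\sigma$ on the positions of $u$ to a transposition of the concatenated word that fixes the positions of $v$, and apply commutativity of $B$ (the paper phrases this for $B \SL A$ with $v$ appended on the right, while you phrase it for $A \BS B$; the two are interchangeable by the identity $A \BS B = B \SL A$ you both invoke). The $\wedge$ case is dismissed as immediate in both arguments.
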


\begin{proof}
 Commutativity of $A \wedge B$ is obvious. For $A \imp B$, take any $u = a_1 \ldots a_n \in A \imp B = B \SL A$ and let $u' = u_{\sigma(1)} \ldots u_{\sigma(n)}$. Now for any $v = a_{n+1} \ldots a_{m} \in A$. By definiton of $B \SL A$, we have
 $uv \in B$. Now by commutativity of $B$, the word $u'v$ also belongs to $B$. Indeed, it is obtained from $uv$ by the following transposition:
 $$
 \tilde{\sigma} =
 \left(
 \begin{matrix}
  1 & 2 & \ldots & n & n+1 & \ldots & m \\
  \sigma(1) & \sigma(2) & \ldots & \sigma(n) & n+1 & \ldots & m
 \end{matrix}\right).
 $$
 Since $v \in A$ was taken arbitrarily, we conclude that $u' \in B \SL A = A \imp B$.
\end{proof}

Having the class of monotone languages and the class of commutative languages closed under our operations ($\BS$, $\SL$, $\wedge$), we can define the classes of restricted L$\varepsilon$-models for all our systems.

\begin{definition}
 An L$\varepsilon$-model is monotone, if all languages in it are monotone. Truth of sequents is defined as in ordinary L$\varepsilon$-models.
\end{definition}

\begin{definition}
 A commutative L$\varepsilon$-model is an L$\varepsilon$-model, where all languages are commutative.  
 %As usual, the interpreting function $w$ commutes with $\imp$ and $\wedge$, but the truth condition is altered. A sequent $A_1, \ldots, A_n \vdash B$ is considered true in a commutative L$\varepsilon$-model $w$, if $w(A_1) \uplus \ldots \uplus w(A_n) \vdash w(B)$; for $n=0$ (empty antecedent), the condition is $\varepsilon \in w(B)$. FIXME: this is equivalent to $\cdot$.
\end{definition}

In commutative models $\uplus$ actually plays the role of product (while we do not have product as a connective, we still have the sequential comma, which is a hidden product), due to the following fact.

\begin{proposition}\label{P:commtruth}
 In a commutative L$\varepsilon$-model $w$, a sequent $A_1, \ldots, A_n \vdash B$ is true if and only if $w(A_1) \uplus \ldots \uplus w(A_n) \subseteq w(B)$.
\end{proposition}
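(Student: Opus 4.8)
The plan is to reduce the statement to one observation: for commutative languages, the iterated multiset union $w(A_1) \uplus \ldots \uplus w(A_n)$ is exactly the \emph{commutative closure} $\overline{w(A_1) \cdot \ldots \cdot w(A_n)}$ of the concatenation, where $\overline{L}$ denotes the set of all words obtained from a word of $L$ by an arbitrary permutation of its letters. Granting this, the equivalence is immediate. Truth of $A_1, \ldots, A_n \vdash B$ means $w(A_1) \cdot \ldots \cdot w(A_n) \subseteq w(B)$. Since concatenation $a_1 \ldots a_n$ is in particular one of the permutations $a_{\sigma(1)} \ldots a_{\sigma(n)}$, we always have $w(A_1) \cdot \ldots \cdot w(A_n) \subseteq w(A_1) \uplus \ldots \uplus w(A_n)$, so the $\uplus$-inclusion into $w(B)$ yields the $\cdot$-inclusion. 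Conversely, $w(B)$ is commutative, hence closed under permutations of its words, so $L \subseteq w(B)$ implies $\overline{L} \subseteq w(B)$; applying this with $L = w(A_1) \cdot \ldots \cdot w(A_n)$ turns the $\cdot$-inclusion back into the $\uplus$-inclusion.

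First I would fix the notion of commutative closure and record the two trivial facts $L \subseteq \overline{L}$ and, for commutative $M$, $\overline{L} \subseteq M \iff L \subseteq M$. Then I would prove by induction on $n$ that $w(A_1) \uplus \ldots \uplus w(A_n) = \overline{w(A_1) \cdot \ldots \cdot w(A_n)}$: the case $n = 1$ holds because $w(A_1)$ is already commutative, and $n = 2$ is precisely the definition of $\uplus$. For the induction step one checks $\overline{\overline{L} \cdot M} = \overline{L \cdot M}$ whenever $M$ is commutative, which holds because both sides consist of exactly those words whose multiset of letters is the multiset sum of a multiset realized in $L$ and one realized in $M$ --- a word and any of its permutations carry the same multiset of letters. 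This step also silently uses that $\uplus$ is associative on commutative languages, which follows from the one-to-one correspondence between commutative languages and multisets over $\Sigma$ (multiset union being associative and commutative); in particular the iterated expression $w(A_1) \uplus \ldots \uplus w(A_n)$ in the statement is unambiguous.

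Finally I would treat the degenerate case $n = 0$: truth of $\vdash B$ means $\varepsilon \in w(B)$, and under the convention that the empty $\uplus$-product is $\{\varepsilon\}$ the two conditions again coincide. The argument presents no genuine obstacle; the only points that need a little care are the well-definedness and associativity of the iterated $\uplus$ over commutative languages and the empty-antecedent edge case, both of which are dispatched by the multiset correspondence.
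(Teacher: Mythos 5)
Your proof is correct and follows essentially the same route as the paper, whose own argument is just the two observations you isolate: the concatenation is contained in the iterated $\uplus$ (giving the ``if'' direction), and $w(B)$ is closed under permutations (giving the ``only if'' direction). Your additional care about identifying $w(A_1) \uplus \ldots \uplus w(A_n)$ with the commutative closure of the concatenation, the associativity of $\uplus$, and the empty-antecedent case merely makes explicit what the paper leaves implicit.
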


\begin{proof}
 The ``if'' part is due to the fact that $w(A_1) \cdot \ldots \cdot w(A_n) \subseteq w(A_1) \uplus \ldots \uplus w(A_n)$. The ``only if'' part holds since $w(B)$ is closed under transpositions.
\end{proof}

Now we prove an extension of Buszkowski's completeness result 

\begin{theorem}\label{Th:compl}
 Each of $\MALC(\BS,\SL,\wedge)$, $\MALCs(\BS,\SL,\wedge)$, $\AMALC(\BS,\SL,\wedge)$, $\ILL(\imp,\wedge)$, $\IAL(\imp,\wedge)$ is sound and complete w.r.t.\ the corresponding class of models, according to the following table:
 {\rm
 \begin{center}
 \begin{tabular}{c|c} 
  {\em Calculus} & {\em Models} \\ \hline
  $\MALC(\BS,\SL,\wedge)$ & L-models \\
  $\MALCs(\BS,\SL,\wedge)$ & L$\varepsilon$-models \\
  $\AMALC(\BS,\SL,\wedge)$ & monotone L$\varepsilon$-models \\
  $\ILL(\imp,\wedge)$ & commutative L$\varepsilon$-models \\
  $\IAL(\imp,\wedge)$ & L$\varepsilon$-models, which are both monotone and commutative
 \end{tabular}
\end{center}}
\end{theorem}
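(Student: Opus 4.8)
The plan is to treat soundness and completeness separately, and in each direction to argue uniformly for all five systems. Soundness is a routine induction on derivations, so the real work is completeness, which we obtain from a single canonical-model construction generalizing Buszkowski's~\cite{Buszko1982} syntactic-concatenation model.

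For soundness, I would check that every axiom and every rule of each calculus preserves truth in the designated class. The identity axioms and all the multiplicative and additive logical rules (together with cut, which is eliminable in $\MALC,\MALCs,\AMALC$ and admissible in $\ILL,\IAL$) are sound in arbitrary L$\varepsilon$-models --- for the additive rules using $w(A\wedge B)=w(A)\cap w(B)$, and for the commutative systems once one has fixed a commutative model so that the multiset format is meaningful (Proposition~\ref{P:commtruth}). The only rules requiring the extra model hypotheses are the structural ones. For $\AMALC$, soundness of $W$ uses monotonicity directly: if $u\in w(\Gamma)$, $x\in w(A)$, $v\in w(\Delta)$ and $uv\in w(C)$, then $uxv$ is obtained from $(u)(v)$ by inserting $x$, hence $uxv\in w(C)$. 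For $\ILL$, permutation is sound because, read via $\uplus$ (Proposition~\ref{P:commtruth}), truth is order-independent. For $\IAL$ one combines the two; in particular the affine axiom $\Gamma,p\vdash p$ is sound because monotonicity (take $u_1=\varepsilon$ in the definition) makes $w(p)$ closed under prepending an arbitrary word, and commutativity then closes it under permutations.

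For completeness, fix a system $S$ and let the alphabet $\Sigma_S$ be the set of all formulas of $S$. Define $w_S(A)=\{\Gamma\mid \Gamma\vdash A\text{ is derivable in }S\}$, where $\Gamma$ ranges over words over $\Sigma_S$ --- nonempty words for $\MALC$, arbitrary words otherwise; for $\ILL$ and $\IAL$ this set is automatically closed under permutations since derivability there depends only on the underlying multiset. A routine induction gives that $A\vdash A$ is derivable in $S$ even when the axioms are restricted to variables, so $A\in w_S(A)$. The heart of the argument is that $w_S$ is a model of the right kind: (i) $w_S(A\BS B)=w_S(A)\BS w_S(B)$ and symmetrically for $\SL$ and $\imp$ --- for ``$\subseteq$'' apply $\BS L$ and cut, for ``$\supseteq$'' instantiate the universally quantified left context by the single formula $A$ and apply $\BS R$ (legitimate in $\MALC$ because that context is a nonempty word over $\Sigma_S$); (ii) $w_S(A\wedge B)=w_S(A)\cap w_S(B)$, by $\wedge R$ one way and cut with $A\wedge B\vdash A$ and $A\wedge B\vdash B$ the other; (iii) $w_S$ is moreover monotone for $\AMALC$ (iterated weakening), commutative for $\ILL$ (by construction), and both for $\IAL$. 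Finally, if $A_1,\dots,A_n\vdash B$ is true in every model of the class, it is in particular true in $w_S$; since $A_i\in w_S(A_i)$, the word $A_1\dots A_n$ lies in $w_S(A_1)\cdots w_S(A_n)\subseteq w_S(B)$ (for commutative $S$, in $w_S(A_1)\uplus\dots\uplus w_S(A_n)\subseteq w_S(B)$ by Proposition~\ref{P:commtruth}), and an empty antecedent is handled via $\varepsilon$; hence the sequent is derivable.

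I do not expect a genuinely deep obstacle: this is Buszkowski's argument, run five times. The care goes into (a) matching each structural rule precisely with its semantic closure condition and verifying that the canonical model lies in the designated class rather than a larger one, and (b) the bookkeeping around Lambek's non-emptiness restriction in the $\MALC$ case --- making sure every invocation of $\BS R$/$\SL R$ in the canonical-model computation has a nonempty context (true, since contexts are words over $\Sigma_S^+$) and that no empty-antecedent sequent is ever forced into $w_S$.
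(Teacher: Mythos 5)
Your proposal is correct and follows essentially the same route as the paper: Buszkowski's canonical model over the alphabet of all formulas, with $w(A)$ the set of antecedents deriving $A$, plus the observation that the structural rules of each system correspond exactly to the monotonicity/commutativity closure conditions on the model class. The only cosmetic difference is that you run the canonical construction uniformly for all five systems, whereas the paper simply cites Buszkowski for $\MALC(\BS,\SL,\wedge)$ and $\MALCs(\BS,\SL,\wedge)$ and does the construction only for the remaining three.
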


\begin{proof}
The cases of $\MALC(\BS,\SL,\wedge)$ and $\MALCs(\BS,\SL,\wedge)$ are due to Busz\-kow\-ski~\cite{Buszko1982}. Let us consider the remaining three systems.

 The soundness part is easy: our conditions on models were specifically designed to reflect structural rules. In a monotone model, if $w(A_1) \cdot \ldots \cdot w(A_k) \cdot w(A_{k+1}) \cdot \ldots \cdot w(A_n) \subseteq w(B)$, then also
 $w(A_1) \cdot \ldots \cdot w(A_k) \cdot w(A) \cdot w(A_{k+1}) \cdot \ldots \cdot w(A_n) \subseteq w(B)$, thus the weakening rule is valid. 
 %If the model is also commutative, then $\cdot$ gets replaced by $\uplus$. In this case $w(B)$ is closed under transpositions, thus $w(A_1) \uplus \ldots \uplus w(A_k) \uplus w(A) \uplus w(A_{k+1}) \uplus \ldots \uplus w(A_n) \subseteq w(B)$ follows from
 %$\bigl(w(A_1) \uplus \ldots \uplus w(A_k) \uplus w(A_{k+1}) \uplus \ldots \uplus w(A_n)\bigr) \cdot w(A) \subseteq w(B)$. The latter is due to monotonicity.
  If we have a commutative L$\varepsilon$-model, then the permutation rule is valid. This is obvious from  Proposition~\ref{P:commtruth}: unlike $\cdot$, $\uplus$ is just commutative. All other rules and axioms are valid in arbitrary L$\varepsilon$-models.
  
  Completeness is proved by Buszkowski's canonical model argument. We do it uniformly for all systems. In the canonical model, the alphabet $\Sigma$ is the set of all formulae of the given calculus, and for any formula $A$ let 
  $$
  w_0(A) = \{ \Gamma \mid \Gamma \vdash A \mbox{ is derivable in the given system} \}.
  $$
  
  First we show that $w_0$ is indeed an L${\varepsilon}$-model:
  \begin{align*}
   & w_0(A \BS B) = w_0(A) \BS w_0(B); \\
   & w_0(B \SL A) = w_0(B) \SL w_0(A); \\
   & w_0(A \wedge B) = w_0(A) \wedge w_0(B).
  \end{align*}
  This is performed exactly as in Buszkowski's proof. Indeed, if $\Gamma \in w_0(A \BS B)$, then for an arbitrary $\Delta \in w_0(A)$ we have $\Gamma \vdash A \BS B$ and $\Delta \vdash A$. Applying cut with $A, A \BS B \vdash B$, we get $\Delta, \Gamma \vdash A$ derivable in our system. Thus, $\Delta\Gamma \in w_0(B)$, therefore $\Gamma \in w_0(A) \BS w_0(B)$. Notice that cut is available in all systems we consider. Dually, if $\Gamma \in w_0(A) \BS w_0(B)$, then, since $A \in w_0(A)$ by the axiom, $A\Gamma \in w_0(B)$. This means derivability $A, \Gamma \vdash B$, thus $\Gamma \vdash A \BS B$. Hence, $\Gamma \in w_0(A \BS B)$.
  
  The $\SL$ case is symmetric. For $\wedge$, we use the equivalence $\Gamma \vdash A \wedge B$ if and only if $\Gamma \vdash A$ and $\Gamma \vdash B$. Here the ``if'' part is an application of $\wedge R$, and the ``only if'' part is by cut with $A \wedge B \vdash A$ and $A \wedge B \vdash B$.

  Next,  is easy to see that the canonical model $w_0$ belongs to the corresponding class of models: monotone for $\AMALC(\BS,\SL,\wedge)$, commutative for $\ILL(\BS,\SL,\wedge)$, commutative and monotone for $\IAL(\BS,\SL,\wedge)$.
  
  Finally, suppose a sequent $\Pi \vdash B$ is not derivable. Consider two cases. If $\Pi = A_1, \ldots, A_n$ is non-empty, then, since each $A_i$ belongs to $w(A_i)$, we have $\Gamma \in w(A_1) \cdot \ldots \cdot w(A_n)$. On the other hand, $\Gamma \notin w(B)$. This falsifies $\Pi \vdash B$ under interpretation $w_0$. If $\Pi$ is empty, then we have $\varepsilon \notin w(B)$, which again falsifies $\Pi \vdash B$. This finishes the completeness proof.
\end{proof}

It is easy to see that soundness actually extends to the language with $\vee$ (interpreted as set-theoretic union). Unions of monotone languages are also monotone, the same for commutative languages. The situation with product is a bit more complicated for commutative systems, since $A \cdot B$ is usually not commutative, even for commutative $A$ and $B$. Thus, we  have to alter the definition of language models in the commutative case, requiring $w(A \cdot B) = w(A) \uplus w(B)$ instead of $w(A \cdot B) = w(A) \cdot w(B)$. Under this modification, soundness holds for product also. Finally, notice that in all models we consider $\vee$ and $\wedge$ are interpreted set-theoretically, thus, obey the distributivity law. These considerations yield the following soundness result:

\begin{proposition}\label{P:sound}
 Each of $\MALC + \mathcal{D}$, $\MALCs + \mathcal{D}$, $\AMALC + \mathcal{D}$, $\ILL + \mathcal{D}$, $\IAL + \mathcal{D}$ is sound w.r.t.\ the corresponding class of models, according to the table in Theorem~\ref{Th:compl}; for $\ILL$ and $\IAL$ in the models we use $\uplus$ to interpret $\cdot$.
\end{proposition}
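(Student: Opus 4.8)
The plan is to prove Proposition~\ref{P:sound} by reducing soundness of each $\mathcal{S} + \mathcal{D}$ to two facts already available: soundness of the base system $\mathcal{S}$ (without $\mathcal{D}$) w.r.t.\ the corresponding class of models, and validity of every instance of $(\mathcal{D})$ in those models. For the first fact, Theorem~\ref{Th:compl} gives soundness of $\mathcal{S}(\BS,\SL,\wedge)$-type systems, but the remark preceding the proposition already extends soundness to the full language with $\vee$ (and, for commutative systems, with $\cdot$ interpreted by $\uplus$), by the observations that unions of monotone languages are monotone, unions of commutative languages are commutative, and $w(A) \uplus w(B)$ is commutative whenever $w(A),w(B)$ are. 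So the residual work is really just to assemble these pieces into a clean induction.

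Concretely, I would argue as follows. Fix one of the five systems $\mathcal{S}$ and its model class $\mathcal{K}$ from the table (with the $\uplus$-interpretation of $\cdot$ in the commutative cases). Let $w$ be any model in $\mathcal{K}$. Proceed by induction on the derivation of a sequent in $\mathcal{S} + \mathcal{D}$. The cases of axioms and of each inference rule of $\mathcal{S}$ are handled exactly as in the soundness direction of Theorem~\ref{Th:compl}, using that each rule preserves truth in $\mathcal{K}$; here I rely on the already-noted extension of that argument to $\vee$ (set-theoretic union) and, where relevant, to the comma interpreted by $\uplus$. The one genuinely new base case is an instance of the axiom scheme $(\mathcal{D})$, i.e.\ a sequent $(A \vee C) \wedge (B \vee C) \vdash (A \wedge B) \vee C$. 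Since in every model of $\mathcal{K}$ the connectives $\vee$ and $\wedge$ are interpreted by set-theoretic union and intersection, it suffices to check the purely set-theoretic inclusion
$$
(X \cup Z) \cap (Y \cup Z) \subseteq (X \cap Y) \cup Z,
$$
which holds for arbitrary sets $X,Y,Z$ (indeed with equality): if a word lies in both $X \cup Z$ and $Y \cup Z$ and not in $Z$, then it lies in $X$ and in $Y$, hence in $X \cap Y$. Finally, the new official rule is $Cut$, so I must verify that $\mathcal{K}$ validates cut: if $w(\Pi) \subseteq w(A)$ (reading $w(\Pi)$ as the $\cdot$- or $\uplus$-product of the components) and $w(\Gamma) \cdot w(A) \cdot w(\Delta) \subseteq w(C)$, then monotonicity of $\cdot$ (resp.\ $\uplus$) w.r.t.\ $\subseteq$ gives $w(\Gamma) \cdot w(\Pi) \cdot w(\Delta) \subseteq w(\Gamma) \cdot w(A) \cdot w(\Delta) \subseteq w(C)$; the empty-antecedent subcase ($\varepsilon$-membership) is equally routine.

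I do not expect a serious obstacle here; the content of the proposition is that the model classes of Theorem~\ref{Th:compl} happen to already validate distributivity ``for free'' because $\vee,\wedge$ are concrete set operations in them. The only point requiring a little care is bookkeeping in the commutative cases: one must be consistent in using the modified interpretation $w(A \cdot B) = w(A) \uplus w(B)$ throughout, check that $\uplus$ is monotone w.r.t.\ $\subseteq$ (immediate from its definition, since $A \cdot B \subseteq A' \cdot B'$ implies the closures under transpositions are nested) and associative on commutative languages, and note that Proposition~\ref{P:commtruth} still describes truth of sequents correctly. With that settled, the induction goes through for all five systems at once, which is exactly the uniform statement of Proposition~\ref{P:sound}.
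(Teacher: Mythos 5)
Your proposal is correct and follows essentially the same route as the paper, which states the proposition as an immediate consequence of the preceding remarks (closure of monotone/commutative languages under union, the $\uplus$-interpretation of $\cdot$, and the fact that set-theoretic $\cap$ and $\cup$ satisfy distributivity) without writing out the induction. You merely make explicit the routine induction on derivations, including the verification of the cut rule, which the paper leaves implicit.
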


Now we are ready to state and prove our conservativity result.

\begin{theorem}\label{Th:conservdisj}
 The systems in the restricted language without $\vee$, $\MALC(\BS,\SL,\wedge)$, $\MALCs(\BS,\SL,\wedge)$, $\AMALC(\BS,\SL,\wedge)$, $\ILL(\imp,\wedge)$, and 
 \mbox{$\IAL(\imp,\wedge)$} are conservative fragments of
 $\MALC + \mathcal{D}$, $\MALCs + \mathcal{D}$, $\AMALC + \mathcal{D}$, $\ILL + \mathcal{D}$, and $\IAL + \mathcal{D}$ respectively.
\end{theorem}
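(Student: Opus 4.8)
The plan is to obtain Theorem~\ref{Th:conservdisj} as a formal consequence of two results already established: the soundness of the distributive systems (Proposition~\ref{P:sound}) and the completeness of the $\wedge$-only fragments (Theorem~\ref{Th:compl}). The product-free, $\vee$-free language serves as the bridge: a sequent living in it that is provable in the larger system is forced, via semantics, back into the smaller one.

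First I would dispose of the trivial half of ``conservative fragment.'' If a sequent in the language of $\BS,\SL,\wedge$ (resp.\ $\imp,\wedge$) is derivable in $\MALC(\BS,\SL,\wedge)$ (resp.\ in the appropriate $\wedge$-fragment up to $\IAL(\imp,\wedge)$), then it is also derivable in $\MALC + \mathcal{D}$ (resp.\ the appropriate extension), because every axiom and every inference rule of the $\wedge$-fragment is among the axioms and rules of the larger system. So only the converse inclusion requires an argument.

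For the converse, let $S$ be one of $\MALC, \MALCs, \AMALC, \ILL, \IAL$, let $S_\wedge$ denote its $\wedge$-only fragment, and let $\mathcal{K}_S$ be the class of models associated with $S_\wedge$ in Theorem~\ref{Th:compl} (L-models, L$\varepsilon$-models, monotone L$\varepsilon$-models, commutative L$\varepsilon$-models, or the models that are both monotone and commutative, respectively). Suppose $\Pi \vdash C$ is a sequent all of whose formulae are built from variables using only $\BS, \SL, \wedge$ (resp.\ $\imp, \wedge$), and suppose $S + \mathcal{D}$ derives it. By Proposition~\ref{P:sound}, $\Pi \vdash C$ is true in every model of the class $\mathcal{K}_S$ --- here one notes that for the commutative systems Proposition~\ref{P:sound} works with models in which $\cdot$ is interpreted by $\uplus$, but since $\cdot$ does not occur in $\Pi \vdash C$ that stipulation is vacuous, so the soundness statement applies verbatim to the class $\mathcal{K}_S$ of Theorem~\ref{Th:compl}. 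Then completeness of $S_\wedge$ with respect to $\mathcal{K}_S$ (Theorem~\ref{Th:compl}) yields a derivation of $\Pi \vdash C$ in $S_\wedge$, which is exactly what is needed.

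There is no genuinely hard step: the theorem follows by composing the two quoted results, and the only point demanding a moment's care is matching up the model classes for $\ILL$ and $\IAL$, where the $\cdot \mapsto \uplus$ modification in Proposition~\ref{P:sound} has to be observed to be harmless on the product-free fragment. Incidentally, the same composition --- now using that $\MALC, \MALCs, \AMALC, \ILL, \IAL$ are themselves sound with respect to these very model classes --- shows that each $\wedge$-fragment is also a conservative fragment of the corresponding system \emph{without} $\mathcal{D}$; in particular, the distributivity principle $(\mathcal{D})$ has no new consequences in the language of $\BS,\SL,\wedge$.
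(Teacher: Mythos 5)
Your proof is correct and follows essentially the same route as the paper: derivability in $S+\mathcal{D}$ gives truth in the corresponding model class by Proposition~\ref{P:sound}, and completeness of the $\wedge$-fragment (Theorem~\ref{Th:compl}) then yields derivability in that fragment. Your extra remarks --- the trivial inclusion direction and the observation that the $\cdot\mapsto\uplus$ stipulation is vacuous on product-free sequents --- are sound points that the paper leaves implicit.
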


\begin{proof}
Let $\Pi \vdash B$ be a sequent in the language of $\BS, \SL, \wedge$ (in the commutative case, $\imp, \wedge$).
 Suppose it is derivable in one of the distributive systems, 
 $\MALC + \mathcal{D}$, \ldots, $\IAL + \mathcal{D}$.  Then by Proposition~\ref{P:sound} it is true in all models of the corresponding class. By Theorem~\ref{Th:compl} it is derivable in, respectively, $\MALC(\BS,\SL,\wedge)$, \ldots, $\IAL(\imp,\wedge)$.
\end{proof}

\subsection{Incompleteness with Additive Disjunction Only}

If we take $\vee$ instead of $\wedge$, however, no analog of the conservativity result like Theorem~\ref{Th:conservdisj} is possible, due to the following counter-example.

%%{\large\sl The counter-example for affine logic\\[5pt]
%%\normalsize
%%(technical draft for the I\&C paper by M. Kanovich, S. Kuznetsov, and A. Scedrov)
%%}

%\vskip 20pt
 
 \begin{theorem}\label{Th:disj}
  The sequent 
  \begin{multline*}
  ((x \SL y) \vee w) \SL ((x \SL y) \vee (x \SL z) \vee w),
  (x \SL y) \vee w, \\ 
  ((x \SL y) \vee w) \BS ((x \SL z) \vee w) \vdash (x \SL (y \vee z)) \vee w
  \end{multline*}
  is derivable in $\MALCd$ %extended with the distributivity principle, 
  but this sequent %(more precisely, its translation into the commutative language) 
  is not derivable in $\IAL$.
  %(and, in particular, the sequent itself is not derivable in $\MALC$).
 \end{theorem}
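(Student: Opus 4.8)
The plan is to split the statement into two independent parts: (1) the sequent is derivable in $\MALCd$, and (2) it is not derivable in $\IAL$. The two parts require completely different techniques, so I would handle them separately.

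For the derivability in $\MALCd$, I would first observe that the three antecedent formulae have the shape of a ``modus-ponens-like'' chain: writing $P = (x \SL y) \vee w$, $Q = (x \SL z) \vee w$, and $R = (x \SL y) \vee (x \SL z) \vee w$, the antecedent is $R \SL P,\; P,\; P \BS Q$. The first two combine (by $\SL L$, i.e. essentially cut against $P \vdash P$) to give $R$, and then $R$ together with $P \BS Q$ should yield, roughly, $Q$ conjoined with the information in $R$. The key point is that in $\MALCd$ we may use the distributive axiom $(\mathcal{D})$ to push the $\vee w$ out of a conjunction: from $((x \SL y) \vee (x \SL z) \vee w) \wedge ((x \SL z) \vee w)$ one wants to derive $(x \SL (y \vee z)) \vee w$, and distributivity is exactly what lets one separate the ``$w$ branch'' from the ``$x \SL \ldots$ branch''. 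I would therefore write out an explicit derivation: use $\SL L$ / $\BS L$ to reduce the sequent to proving $R \wedge Q \vdash (x \SL(y \vee z)) \vee w$ (the $\wedge$ appearing because both $R$ and $Q$ are ``available'' from the $P$ in the middle), then apply an instance of $(\mathcal{D})$, then in the non-$w$ branch use that $(x \SL y) \wedge (x \SL z) \vdash x \SL (y \vee z)$ is derivable in $\MALC$ (this is the standard half of the $\SL$–$\vee$ interaction, provable without distributivity), finishing with $\vee R_l$/$\vee R_r$. The main care here is bookkeeping the antecedent structure so that the two copies of $P$-derived material really do end up as a single conjunction rather than two separate commas.

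For the non-derivability in $\IAL$, I would argue semantically using Proposition~\ref{P:sound} together with Theorem~\ref{Th:compl}: it suffices to exhibit a commutative-and-monotone L$\varepsilon$-model (with $\cdot$ interpreted by $\uplus$) in which the sequent fails, because such a model refutes derivability in $\IAL$. Actually it is cleaner to refute it already in the weakest relevant setting and note the argument transfers upward; but concretely I would build one small model. I would choose the alphabet and the values $w(x), w(y), w(z), w(w)$ so that $w(x \SL y)$, $w(x \SL z)$, and $w(w)$ are pairwise incomparable, the ``$\vee$'' (union) on the left genuinely mixes them, and yet $w(x \SL (y \vee z)) \cup w(w)$ misses some word that the left-hand side produces — the whole point being that without distributivity (equivalently, in a genuine language/$\uplus$ model where $\vee,\wedge$ are set-theoretic but $\SL$ is a residual of a non-idempotent product) the sequent need not hold. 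I expect the cleanest route is to reuse the standard Ono–Komori style countermodel to distributivity, adapted to the language model format: take $\Sigma$ with a few letters, define $w(x),w(y),w(z)$ as explicit finite languages so that the residuals come out as intended, and verify by direct computation that each antecedent formula contains the one-letter (or empty) word needed to make the antecedent product nonempty, while the succedent does not contain it.

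The main obstacle, and where I would spend the most effort, is the countermodel: one must simultaneously satisfy the closure conditions (commutative, monotone) and make the arithmetic of the residuals $\SL$ and $\BS$ come out so that the ``transported'' distributivity corollary fails. Verifying $u \in w\big(((x \SL y)\vee w)\SL(\ldots)\big)$ and the analogous membership for the $\BS$-formula requires checking a universally quantified condition over the language $w((x\SL y)\vee(x\SL z)\vee w)$, which is itself defined by a universal condition — so the verification is a short but genuinely nested computation. Once a concrete model is fixed, I would present the membership checks as a compact displayed list rather than prose, and then conclude: the antecedent product (under $\uplus$) is nonempty and contained in no interpretation of the succedent, so by soundness (Proposition~\ref{P:sound}) the sequent is not derivable in $\IAL$, while the explicit derivation of the first part shows it is derivable in $\MALCd$.
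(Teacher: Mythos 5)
Your first half is essentially the paper's own argument (the Lambek--Pentus ``joining'' construction): with $A=(x\SL y)\vee w$ and $B=(x\SL z)\vee w$ the antecedent derives both $A$ and $B$ in plain $\MALC$, so $\wedge R$ gives $A\wedge B$, and distributivity together with $(x\SL y)\wedge(x\SL z)\vdash x\SL(y\vee z)$ finishes. One correction, though: the conjunction you should reduce to is $A\wedge B=((x\SL y)\vee w)\wedge((x\SL z)\vee w)$, not $R\wedge Q$. By lattice absorption $R\wedge Q$ is equivalent to $Q=(x\SL z)\vee w$, and $(x\SL z)\vee w\vdash(x\SL(y\vee z))\vee w$ is false already in L-models (division is antitone in the denominator, so $x\SL z$ does not entail $x\SL(y\vee z)$); that reduction cannot be completed even with $\mathcal{D}$.

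The second half rests on an impossible plan. There is no commutative, monotone L$\varepsilon$-model --- indeed no language model of any kind considered here --- falsifying the sequent. By your own first half it is derivable in $\MALCd$, hence in $\IAL+\mathcal{D}$, and Proposition~\ref{P:sound} then makes it true in \emph{every} commutative monotone L$\varepsilon$-model. That is exactly the point of the theorem: the sequent is valid in all language models yet underivable, which is what witnesses incompleteness of $\IAL(\imp,\vee)$. In language models $\vee$ and $\wedge$ are set-theoretic union and intersection, so distributivity holds automatically; the Ono--Komori countermodel is an algebraic, non-distributive lattice and cannot be ``adapted to the language model format.'' To establish non-derivability in $\IAL$ you must either give a syntactic argument --- the paper carries out an exhaustive, weakening-aware proof search in the cut-free commutative calculus, organized around a short list of underivable auxiliary sequents --- or exhibit an algebraic countermodel, i.e.\ a non-distributive commutative residuated lattice that additionally validates weakening ($a\cdot b\preceq a$); the paper notes that the known five-element algebraic countermodel fails this last requirement, which is precisely why it falls back on proof search.
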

 
 This sequent is in the language of $\BS,\SL,\vee$. The theorem states that it is derivable in $\MALC + \mathcal{D}$, and therefore in all its extensions up to $\IAL + \mathcal{D}$, but not in the corresponding ($\BS,\SL,\vee$) fragments without the distributivity law added. Thus, this is a non-trivial corollary of $\mathcal{D}$ in the language without $\wedge$.
 In particular, Theorem~\ref{Th:disj} implies that $\MALC(\BS,\SL,\vee)$ is incomplete w.r.t.\ L-models, as well as $\MALCs(\BS,\SL,\vee)$, $\AMALC(\BS,\SL,\vee)$, $\ILL(\imp,\vee)$, $\IAL(\imp,\vee)$ are incomplete w.r.t.\ the corresponding modifications of L-models (compare with Theorem~\ref{Th:compl}).
 
 Before proving Theorem~\ref{Th:disj}, let us make some remarks. First, let us notice that the sequent in this theorem is slightly different from the one in our WoLLIC 2019 paper~\cite{KanKuzSceWoLLICLMod}, where one variable is used for $x$ and $w$. The reason is that the old example happens to be derivable in $\IAL$ (but still not in $\ILL$ and weaker systems). 
 
 Second, the hard part of Theorem~\ref{Th:disj} is, of course, the second one (non-derivability). Fortunately, the derivability problem in $\MALC$ is algorithmically decidable (belongs to PSPACE), thus, it is possible to establish non-derivability by exhaustive proof search. This proof search was first performed, as a pre-verification of the result, automatically using an affine modification of {\tt llprover} by Tamura~\cite{Tamura}. (For the WoLLIC 2019 paper, we used a $\MALC$ prover by Jipsen~\cite{Jipsenprover}, based on the algorithm by Okada and Terui~\cite{OkadaTerui}.) In order to make this article self-contained and independent from proof-search software, here we present a complete manual proof search. 
 
 One of the WoLLIC 2019 reviewers suggested a shorter method of proving non-derivability of the given sequent in $\MALC$, via an algebraic counter-model. This counter-model is a commutative residuated lattice on the set $R = \{ 0, a, b, c, 1 \}$. The order is defined as follows: $0 \prec a,b,c \prec 1$; $a,b,c$ are incomparable. Product and residual are defined as follows:
 $$\begin{array}{c|ccccc}
\cdot&0&a&b&c&1\\\hline
0&0&0&0&0&0\\
a&0&a&b&c&1\\
b&0&b&a&c&1\\
c&0&c&c&0&c\\
1&0&1&1&c&1\\
\end{array}
\qquad
\begin{array}{c|ccccc}
\imp&0&a&b&c&1\\\hline
0&1&1&1&1&1\\
a&0&a&b&c&1\\
b&0&b&a&c&1\\
c&c&c&c&1&1\\
1&0&0&0&c&1\\
\end{array}$$
(In the commutative situation, we have only one residual, which we denote by $\imp$.) Variables are interpreted as follows: $y$ as $b$, $z$ as $c$, $x$ and $w$ both as $a$.
This algebraic model falsifies the sequent in Theorem~\ref{Th:disj}. However, is insufficient for our new purposes. The reason is that in this model $a \cdot b = b \not\preceq a$, while in the presence of weakening $A \cdot B \vdash A$ should be true. Thus, in order to establish non-derivability of our sequent not only in $\MALC$, but also in $\IAL$, we use the good old syntactic method. 
 
% FIXME model TBA ... does not work for IAL ($a \cdot b = b \not\preceq a$)
 
% ... FIXME: proof search (move from footnote), algebraic counter-model ....
 
% FIXME: non-conservativity corollary + incompleteness of $\MALC(\BS,\SL,\vee)$ w.r.t.\ L-models (+ epsilon case)

 \begin{proof}[Proof of Theorem~\ref{Th:disj}]
 The first statement is proved using the joining (diamond) construction, the idea of which goes back to Lambek~\cite{Lambek58} and Pentus~\cite{PentusJoLLI}. Indeed, let $A = (x \SL y) \vee w$ and $B = (x \SL z) \vee w$. Then $A \vee B$ is equivalent to $(x \SL y) \vee (x \SL z) \vee w$. One can easily check derivability of $A \SL (A \vee B), A, A \BS B \vdash A$ and
 $A \SL (A \vee B), A, A \BS B \vdash B$ in $\MALC$. Notice that the antecedent of this sequent is exactly the one in the sequent of our theorem. Next, we derive $A \SL (A \vee B), A, A \BS B \vdash A \wedge B$, and further by distributivity
 $A \wedge B \equiv ((x \SL y) \wedge (x \SL z)) \vee w \equiv
 (x \SL (y \vee z)) \wedge w$.
 
 The second statement is proved by an exhaustive proof search for the sequent
 \begin{multline*}
 ((y \imp x) \vee (z \imp x) \vee w) \imp ((y \imp x) \vee w),
 (y \imp x) \vee w, \\
 ((y \imp x) \vee w) \imp ((z \imp x) \vee w)
 \vdash ((y \vee z) \imp x) \vee w
 \end{multline*}
 (the  translation of our sequent into the commutative language) in $\IAL$.
 
 In order to facilitate proof search, we take into account the following considerations.
 
 First, the rules $\vee L$ and $\imp R$ are invertible. Thus, we can suppose they are applied immediately. Moreover, $\vee L$ has two premises, and when disproving derivability we have the right to choose one and establish non-derivability there.
 
 Second, we can suppose that in our (hypothetic) derivation instances of $\vee L_r$ of the form $\dfrac{\Gamma \vdash w}{\Gamma \vdash F \vee w}$ are directly preceded by axioms. Indeed, such instances are interchangeable upwards with $\imp L$ and $\vee L$, and $\imp R$ cannot appear before this $\vee L_r$, since $w$ is a variable. Other rules are impossible by the polarized subformula property.
 
 Third, we establish non-derivability of several sequents, which will appear frequently in our proof search:
 \begin{equation}\label{yxw}
 \not\vdash (y \imp x) \vee w
 \end{equation}
 \begin{equation}\label{zyxw}
  z \not\vdash (y \imp x) \vee w
 \end{equation}
 \begin{equation}\label{yyxw}
 y \not\vdash (y \imp x) \vee w
 \end{equation}
 \begin{equation}\label{zyyxw}
  z,y \not\vdash (y \imp x) \vee w
 \end{equation}
 \begin{equation}\label{zzyxw}
  z,z \not\vdash (y \imp x) \vee w
 \end{equation}
 \begin{equation}\label{zyxzxw}
 z \not\vdash (y \imp x) \vee (z \imp x) \vee w
 \end{equation}
 \begin{equation}\label{yxzxw}
 \not\vdash (y \imp x) \vee (z \imp x) \vee w
 \end{equation}
 \begin{equation}\label{zyyxzxw}
 z, y \not\vdash (y \imp x) \vee (z \imp x) \vee w
 \end{equation}

 Now we are ready to start proof search. First we invert $\vee L$ introducing $(y \imp x) \vee w$ and choose $y \imp x$:
  %\begin{center}\fbox{
 %$
 \begin{multline*}
 ((y \imp x) \vee (z \imp x) \vee w) \stackrel{2}{\imp}
 ((y \imp x) \vee w),
 y \stackrel{3}{\imp} x, \\
 ((y \imp x) \vee w) \stackrel{4}{\imp} ((z \imp x) \vee w)
 \vdash ((y \vee z) \imp x) \stackrel{1}{\vee} w
 \end{multline*}
 %$$
 %}\end{center}
 
 Now we have a choice of 4 principal connectives (denoted by numbers in the sequent) to be decomposed first.
 
 {\bf Case 1.} In this case, we use $\vee R_l$, thanks to our consideration that $\vee R_r$ with $w$ should be applied immediately after an axiom.
 \begin{multline*}
 ((y \imp x) \vee (z \imp x) \vee w) \stackrel{2}{\imp}
 ((y \imp x) \vee w),
 y \stackrel{3}{\imp} x, \\
 ((y \imp x) \vee w) \stackrel{4}{\imp} ((z \imp x) \vee w)
 \vdash (y \vee z) \imp x
 \end{multline*}
Invert $\imp R$ and $\vee L$, choosing $z$ out of $y \vee z$:
\begin{multline*}
((y \imp x) \vee (z \imp x) \vee w) \stackrel{2}{\imp}
 ((y \imp x) \vee w),
 y \stackrel{3}{\imp} x,\\
 ((y \imp x) \vee w) \stackrel{4}{\imp} ((z \imp x) \vee w), z \vdash x
\end{multline*}

Now we can decompose (by $\imp L$) one of the implications 2--4, and for each we have a choice of $8 = 2^3$ ways of splitting the rest of the antecedent into $\Pi$ and $\Gamma$. Making use of the weakening rule, however, we can reduce the number of cases.

{\bf Subcase 1--2.} If $\Pi$ includes $y \stackrel{3}{\imp x}$, then the right premise is $\Gamma, (y \imp x) \vee w \vdash x$, where $\Gamma$ is a subset of $z, ((y \imp x) \vee w) \imp ((z \imp x) \vee w)$.
Notice that if $\Gamma' \subseteq \Gamma$ and the sequent is not derivable with $\Gamma$, it is also not derivable with $\Gamma'$ (otherwise we could derive it with $\Gamma$ using the weakening rule).
However, the sequent is not derivable even with the maximal $\Gamma$:
$$
z, ((y \imp x) \vee w) \imp ((z \imp x) \vee w), (y \imp x) \vee w
\not\vdash x.
$$
Indeed, invert $\vee L$ and choose $w$:
$$
z, w, ((y \imp x) \vee w) \imp ((z \imp x) \vee w) \not \vdash x.
$$
Here one should use $\imp L$, %but if the new $\Pi$ is $\Lambda$ or $z$, then the left premise is not derivable by~\ref{yxw} or~\ref{zyxw}, and if it is $w$ or $z,w$, then in the 
 but then in its right premise we can again invert $\vee L$ choosing $w$, which yields one of:
 $$
 w \vdash x \qquad z,w \vdash x \qquad w,w \vdash x \qquad
 z,w,w \vdash x.
 $$
 None of these is derivable.
 
 If $\Pi$ does not include $y \stackrel{3}{\imp} x$, 
 then $\Pi$ is a subset of
 $((y \imp x) \vee w) \imp ((z \imp x) \vee w), z$, and we again take the maximal $\Pi$ in the left premise:
 \begin{equation}\label{three}
 ((y \imp x) \vee w) \imp ((z \imp x) \vee w), z \vdash (y \imp x) \vee (z \imp x) \vee w
 \end{equation}
 Decomposing $\imp$ yields either $\vdash (y \imp x) \vee w$ or
 $z \vdash (y \imp x) \vee w$, both not derivable by~(\ref{yxw}) and~(\ref{zyxw}). Thus, we have to decompose $\vee$ on the right. 
 
 Taking $y \imp x$ (and inverting $\imp R$) yields
 $$
 ((y \imp x) \vee w) \imp ((z \imp x) \vee w), z, y \vdash x.
 $$
 Now we again have to use $\imp L$. The new cases are $y \vdash (y \imp x) \vee w$ and $z, y \vdash (y \imp x) \vee w$, both not derivable~(\ref{yyxw})(\ref{zyyxw}).
 
 Taking $z \imp x$ and inverting $\imp R$ gives
 $$
 ((y \imp x) \vee w) \imp ((z \imp x) \vee w), z, z \vdash x.
 $$
 Decomposing $\imp$ fails due to~(\ref{yxw})(\ref{zyxw})(\ref{zzyxw}).
 
 {\bf Subcase 1--3.} Apply $\imp L$ for $\stackrel{3}{\imp}$ and
 consider its left premise with the maximal possible $\Pi$:
 \begin{equation}\label{vorpal}
 ((y \imp x) \vee (z \imp x) \vee w) \stackrel{2}{\imp}
 ((y \imp x) \vee w), ((y \imp x) \vee w) \stackrel{4}{\imp}
 ((z \imp x) \vee w), z \vdash y.
 \end{equation}
 
 {\em Subsubcase 1--3--2.} Decompose $\stackrel{2}{\imp}$. If the big formula with $\stackrel{4}{\imp}$ goes to the new $\Gamma$,
 then the new $\Pi$ is either $z$ or empty. However, neither $z \vdash (y \imp x) \vee (z \imp x) \vee w$ nor 
 $\vdash (y \imp x) \vee (z \imp x) \vee w$ is derivable~(\ref{zyxzxw})(\ref{yxzxw}).
 If the formula with $\stackrel{4}{\imp}$ goes to the new $\Pi$, then the new $\Gamma$ is either $z$ or empty. This gives, at maximum, $z, (y \imp x) \vee w \vdash y$, which is falsified by choosing $w$ in the inverted $\vee L$: $z, w \not\vdash y$.
 
 {\em Subsubcase 1--3--4.} Decompose $\stackrel{4}{\imp}$. Again, if the big formula (now with $\stackrel{2}{\imp}$) goes to the new $\Gamma$, we falsify the left premise by~(\ref{yxw}) or~(\ref{zyxw}). Otherwise, the right premise is, at maximum, $z, (z \imp x) \vee w \vdash y$, which is again falsified by choosing $w$.
 
 {\bf Subcase 1--4.} 
 If $\Pi$ includes $y \stackrel{3}{\imp} x$, then the right premise is, at maximum,
 $$
 ((y \imp x) \vee (z \imp x) \vee w) \stackrel{2}{\imp}
 ((y \imp x) \vee w), z, (z \imp x) \vee w \vdash x
 $$
 Invert $\vee L$ and choose $w$:
 $$
 ((y \imp x) \vee (z \imp x) \vee w) \stackrel{2}{\imp}
 ((y \imp x) \vee w), z, w \vdash x
 $$
 Now we have to use $\stackrel{2}{\imp} L$. Its right premise is, at maximum, $z,w,(y \imp x) \vee w \vdash x$. Choosing $w$ falsifies it.
 
 If $y \stackrel{3}{\imp} x$ is in $\Gamma$, then the maximal version of the {\em left} premise is
 \begin{equation}\label{jujub}
 ((y \imp x) \vee (z \imp x) \vee w) \stackrel{2}{\imp}
 ((y \imp x) \vee w), z \vdash (y \imp x) \vee w.
 \end{equation}
 Applying $\stackrel{2}{\imp}$ right now is impossible: its left premise gets falsified by (\ref{yxzxw}) or (\ref{zyxzxw}). Apply $\vee R_l$ (recall that $\vee R_r$ is used only directly below axiom) and invert $\imp R$:
 $$
 ((y \imp x) \vee (z \imp x) \vee w) \stackrel{2}{\imp}
 ((y \imp x) \vee w), z, y \vdash x.
 $$
 Here the left premise of $\stackrel{2}{\imp}$ is also falsified by
 (\ref{yxzxw}), (\ref{zyxzxw}), or (\ref{zyyxzxw}).

 \vskip 10pt
 
 {\bf Case 2.} 
 Consider again two cases, depending on whether $y \stackrel{3}{\imp} x$ goes to $\Pi$ or to $\Gamma$. If it goes to $\Pi$, then the right premise is, at maximum,
 $$
 (y \imp x) \vee w,
 ((y \imp x) \vee w) \stackrel{4}{\imp} ((z \imp x) \vee w) \vdash
 ((y \vee z) \imp x) \stackrel{1}{\vee} w.
 $$
 Invert $\vee L$ and choose $y \imp x$:
 \begin{equation}\label{alice}
 y \stackrel{5}{\imp} x, ((y \imp x) \vee w) \stackrel{4}{\imp} ((z \imp x) \vee w) \vdash
 ((y \vee z) \imp x) \stackrel{1}{\vee} w.
 \end{equation}
 
 For reusal of our reasoning in further cases, we shall falsify a stronger sequent
 \begin{equation}\label{rabbit}
 y \stackrel{5}{\imp} x, ((y \imp x) \vee (z \imp x) \vee w) \stackrel{4}{\imp} ((z \imp x) \vee w) \vdash
 ((y \vee z) \imp x) \stackrel{1}{\vee} w.
 \end{equation}

 Indeed, $(y \imp x) \vee w \vdash (y \imp x) \vee (z \imp x) \vee w$, and therefore $((y \imp x) \vee (z \imp x) \vee w) \imp ((z \imp x) \vee w) \vdash ((y \imp x) \vee w) \imp ((z \imp x) \vee w)$ is derivable in $\IAL$. Thus,
 if (\ref{alice}) happens to be derivable then, by cut, so will
 be (\ref{rabbit}).

 Now we decompose one of $\stackrel{1}{\vee}$,
 $\stackrel{4}{\imp}$, $\stackrel{5}{\imp}$ in~(\ref{rabbit}).
 
 {\bf Subcase 2--$\Pi$--1.}
 Recall that we never choose $w$ in $\vee R$, and invert $\imp R$:
 $$
 y \stackrel{5}{\imp} x, 
 ((y \imp x) \vee (z \imp x) \vee w) \stackrel{4}{\imp} ((z \imp x) \vee w), 
 y \vee z \vdash x.
 $$
 Invert $\vee L$ and choose $z$:
 $$
 y \stackrel{5}{\imp} x, 
 ((y \imp x) \vee (z \imp x) \vee w) \stackrel{4}{\imp} ((z \imp x) \vee w), 
  z \vdash x.
 $$

 {\em Subsubcase 2--$\Pi$--1--5.}
 The left premise is, at maximum, $$((y \imp x) \vee (z \imp x) \vee w)
 \stackrel{4}{\imp} ((z \imp x) \vee w), z \vdash y.$$
 Applying $\stackrel{4}{\imp} L$ is impossible, since
 its right premise is falsified by choosing $w$:
 $w,z \not\vdash y$ and $w \not\vdash y$.
 
 {\em Subsubcase 2--$\Pi$--1--4.}
 Again, if $y \stackrel{5}{\imp} x$ goes to the new $\Pi$, then the right premise is, at maximum, $z, (z \imp x) \vee w \vdash x,$
 which is falsified by choosing $w$. If it goes to the new $\Gamma$, then the new {\em left} premise is, at maximum, $z \vdash (y \imp x) \vee (z \imp x) \vee w$, which is not derivable by~(\ref{zyxzxw}).
 
 {\bf Subcase 2--$\Pi$--4.}
 If the new $\Pi$ is empty, then the left premise is falsified 
 by~(\ref{yxzxw}). Otherwise, the right premise is
 $$
 (z \imp x) \vee w \vdash ((y \vee z) \imp x) \vee w.
 $$
 Invert $\vee L$ and choose $z \imp x$:
 $$
 z \imp x \vdash ((y \vee z) \imp x) \vee w.
 $$
 Applying $\imp L$ is impossible ($\not\vdash z$); also
 $z \imp x \not\vdash w$. Thus, we have to use $\vee R_l$, and we can immediately apply $\imp R$ afterwards:
 $
 z \imp x, y \vee z \vdash x.
 $
 Inverting $\vee L$ and choosing $y$ falsifies this sequent:
 $z \imp x, y \not\vdash x$.
 
 {\bf Subcase 2--$\Pi$--5.}
 The left premise is, at maximum,
 $$
 ((y \imp x) \vee (z \imp x) \vee w) \imp 
 ((z \imp x) \vee w) \vdash y.
 $$
 This is not derivable.
 
\vskip 5pt
Now let, in Case~2, $y \stackrel{3}{\imp} x$ go to $\Gamma$.
Then the left premise is, at maximum,
$$
((y \imp x) \vee w) \stackrel{4}{\imp} ((z \imp x) \vee w) \vdash
(y \imp x) \vee (z \imp x) \vee w.
$$
This sequent is stronger than~(\ref{three})---that is, (\ref{three}) can be obtained from it by weakening. Therefore, it cannot be derivable, since we've already falsified~(\ref{three}) in Case~1.

\vskip 10pt

{\bf Case 3.} Take the maximal possible $\Pi$ and consider the left premise:
$$
((y \imp x) \vee (z \imp x) \vee w) \stackrel{2}{\imp}
((y \imp x) \vee w), ((y \imp x) \vee w) \stackrel{4}{\imp} 
((z \imp x) \vee w) \vdash y.
$$
This sequent is stronger than~(\ref{vorpal}), and therefore not derivable: (\ref{vorpal}) was falsified in Case~1.

{\bf Case 4.} If $y \stackrel{3}{\imp} x$ goes to $\Pi$, then the maximal version of the right premise of $\stackrel{4}{\imp} L$ is
$$
((y \imp x) \vee (z \imp x) \vee w) \stackrel{2}{\imp} ((y \imp x) \vee w), (z \imp x) \vee w \vdash ((y \vee z) \imp x) 
\stackrel{1}{\vee} w. 
$$
Invert $\vee L$ and choose $z \imp x$:
$$
((y \imp x) \vee (z \imp x) \vee w) \stackrel{2}{\imp} ((y \imp x) \vee w), z \stackrel{6}{\imp} x \vdash ((y \vee z) \imp x) 
\stackrel{1}{\vee} w. 
$$
Suppose this sequent is derivable. Then it will also be derivable after swapping variables $y$ and $z$:
$$
((z \imp x) \vee (y \imp x) \vee w) \imp ((z \imp x) \vee w), y \imp x \vdash ((z \vee y) \imp x) 
\vee w. 
$$
This sequent, however, is exactly~(\ref{rabbit}), up to commutativity; (\ref{rabbit}) was falsified in Case~2.

Finally, if $y \stackrel{3}{\imp} x$, in Case~4, goes to $\Gamma$, then the maximal version of the left premise of $\stackrel{4}{\imp} L$ is 
$$
((y \imp x) \vee (z \imp x) \vee w) \imp
((y \imp x) \vee w) \vdash ((y \vee z) \imp x) \vee w.
$$
This sequent is stronger than~(\ref{jujub}) and therefore cannot be derivable.
\end{proof}

\section{Undecidability with $\BS$, $\wedge$, and $\One$}\label{S:unit}

\subsection{The System $\LLe(\BS,\wedge,\One)$ and Its Undecidability}

In this section we consider the extension of the Lambek calculus with the multiplicative unit constant. The language of our fragment will be as follows:
$\BS$, $\wedge$, $\One$. As shown by Buszkowski~\cite{Buszko1982}, in the fragment of $\BS$ and $\wedge$ the Lambek calculus with empty antecedents
is complete w.r.t.\ L$\varepsilon$-models. As noticed
in the Introduction, however, this is not the case if we add $\One$. In L$\varepsilon$-models, because of the principle $A \cdot \One \vdash \One$, the unit constant $\One$ is 
necessarily interpreted as the singleton set $\{\varepsilon\}$, where $\varepsilon$ is the empty word. (In the presence of the unit constant, we allow the empty word
to belong to our languages and abolish Lambek's non-emptiness restriction.) This particular interpretation of $\One$ satisfies certain principles, including
$A \cdot \{ \varepsilon \} = \{ \varepsilon \} \cdot A$ and $\{ \varepsilon \} \cdot \{ \varepsilon \} = \{ \varepsilon \}$. Moreover, these principles keep valid for 
languages of the form $\{\varepsilon\} \cap B$ (for any $B$). Indeed, this language is either $\{\varepsilon\}$ or $\varnothing$, and for the empty set $\varnothing$ we
also have $A \cdot \varnothing = \varnothing \cdot A$ and $\varnothing \cdot \varnothing = \varnothing$.

Below we present a calculus denoted by $\LLe(\BS,\wedge,\One)$, which reflects these principles as sequential rules:
$$
\infer[Id]{A \vdash A}{}
\qquad
\infer[\One]{A, \One \vdash A}{}
$$
$$
\infer[\BS L]{\Gamma, \Pi, A \BS B, \Delta \vdash C}
{\Pi \vdash A & \Gamma, B, \Delta \vdash C}
\qquad
\infer[\BS R]{\Pi \vdash A \BS B}
{A, \Pi \vdash B}
$$
$$
\infer[\wedge L_l]{\Gamma, A \vee B \vdash C}{\Gamma, A \vdash C}
 \qquad
 \infer[\wedge L_r]{\Gamma, A \vee B \vdash C}{\Gamma, B \vdash C}
 \qquad 
 \infer[\wedge R]{\Pi \vdash A \wedge B}{\Pi \wedge A & \Pi \wedge B}
$$
$$
\infer[L\varepsilon]
{\Gamma, \One \wedge G, A, \Delta \vdash C}
{\Gamma, A, \One \wedge G, \Delta \vdash C}
\quad
\infer[R\varepsilon]
{\Gamma, A, \One \wedge G, \Delta \vdash C}
{\Gamma, \One \wedge G, A, \Delta \vdash C}
\quad
\infer[D\varepsilon]
{\Gamma, \One \wedge G, \Delta \vdash C}
{\Gamma, \One \wedge G, \One \wedge G, \Delta \vdash C}
$$

The rules $L\varepsilon$ and $R\varepsilon$ are called ``commuting'' rules; they reflect the fact that, for any set $X$, 
$X \cdot \{\varepsilon\} = \{\varepsilon\} \cdot X$ and $X \cdot \varnothing = \varnothing \cdot X$. The ``doubling'' rule
$D \varepsilon$ is caused by $\{\varepsilon\} \cdot \{\varepsilon\} = \{\varepsilon\}$ and $\varnothing \cdot \varnothing =
\varnothing$. Thus, these rules express natural algebraic properties of the interpretation of $\One$ as $\varnothing$.
However, we emphasize that they are not admissible in the standard calculus $\LU$, introduced by Lambek~\cite{Lambek69}, that is,
non-commutative intuitionistic multiplicative-additive linear logic.

The rules $L\varepsilon$, $R\varepsilon$, and $D \varepsilon$ are not new. Their underlying principles, namely, $(\One \wedge G) \cdot A \equiv A \cdot (\One \wedge G)$ and $(\One \wedge G) \cdot (\One \wedge G) \equiv \One \wedge G$ appear in works of the Hungarian school (Andr\'{e}ka, Mikul\'{a}s, N\'{e}meti). Namely, in~\cite{Andreka2011TCS} one can find the first of these equivalences (denoted there as formula 3.2), as one of the principles which is true in language algebras, but not in algebras of binary relations. 
%Formula 3.1 in~\cite{Andreka2011TCS} is $\One 
The second equivalence is true for binary relations also; formula (CbI) in~\cite{Andreka2011AlgUniv} is actually its stronger version, $(\One \wedge G) \cdot (\One \wedge F) \equiv \One \wedge G \wedge F$. We get our $(\One \wedge G) \cdot (\One \wedge G) \equiv \One \wedge G$ by taking $F = G$.

Andr\'{e}ka, Mikul\'{a}s, and Sain~\cite{Andreka1994LIF} also sketch an undecidability proof for a system related to the one considered here. Their proof is based on the technique of Kurucz et al.~\cite{Kurucz1993}. The system considered in~\cite{Andreka1994LIF} is the logic of residuated distributive lattices over monoids. Unlike the case we consider in this section, their system requires product, the unit and also the zero constant (the minimal element of the lattice) to be present in the language. Here we require only division, additive conjunction, and the unit. The trade-off is that we consider a narrower class of models. Namely, we consider only L$\varepsilon$-models, and these models, as shown above, allow extra principles for $\One$.

We do not claim that $\LLe(\BS,\wedge,\One)$ is an L$\varepsilon$-complete system. Indeed, the L$\varepsilon$-complete extension of $\LU$ happens to
be quite involved (cf.~\cite{KuznJANCL}). In particular, it is still an open problem whether such a complete system is recursively enumerable.
The cut rule is not included in $\LLe(\BS,\wedge,\One)$, so all our derivations will be cut-free. We do not claim that cut is admissible in
this system.

We prove undecidability for the whole range of systems between $\LLe(\BS,\wedge,\One)$ and the L$\varepsilon$-complete system in the language of $\BS$, $\wedge$, $\One$.

\begin{theorem}\label{Th:undec}
 Let $\mathcal{L}$ be an L$\varepsilon$-sound logic which includes $\LLe(\BS,\wedge,\One)$. Then the derivability problem for $\mathcal{L}$ is undecidable.
\end{theorem}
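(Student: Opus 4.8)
The plan is to reduce an undecidable rewriting problem to derivability in $\mathcal{L}$. Fix a semi-Thue (string rewriting) system $T$ over a finite alphabet $\Sigma$, writing $u \Rightarrow_T v$ for a single rewrite step and $\Rightarrow_T^{*}$ for its reflexive--transitive closure; the reachability problem ``given words $s,t$, does $s \Rightarrow_T^{*} t$?'' is undecidable for a suitable fixed $T$ (Post, Markov), and this is exactly the kind of problem underlying the Kurucz et al.\ technique and the undecidability sketch of Andr\'{e}ka, Mikul\'{a}s and Sain. For each pair $s,t$ I will build, in polynomial time, a sequent $S_{s,t}$ in the language $\BS,\wedge,\One$ with the double equivalence: $S_{s,t}$ is derivable in $\LLe(\BS,\wedge,\One)$ if and only if $s \Rightarrow_T^{*} t$, and this in turn holds if and only if $S_{s,t}$ is true in every L$\varepsilon$-model. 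Granting this, the theorem is immediate: if $s \Rightarrow_T^{*} t$ then $S_{s,t}$ is derivable already in the minimal system $\LLe(\BS,\wedge,\One)$, hence in $\mathcal{L}$; if $s \not\Rightarrow_T^{*} t$ then $S_{s,t}$ fails in some L$\varepsilon$-model, hence --- since $\mathcal{L}$ is L$\varepsilon$-sound --- $S_{s,t}\notin\mathcal{L}$. Thus membership ``$S_{s,t}\in\mathcal{L}$'' decides ``$s\Rightarrow_T^{*}t$''. Note that the hypothesis $\LLe(\BS,\wedge,\One)\subseteq\mathcal{L}$ is used only for the first implication and L$\varepsilon$-soundness only for the second, so no intermediate system ever has to be analysed on its own.

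The encoding treats formulas of the form $\One\wedge G$ as a ``poor man's subexponential'': $L\varepsilon$ and $R\varepsilon$ let such a formula migrate to any position in an antecedent, $D\varepsilon$ lets it be duplicated, and $\wedge L$ lets it be ``activated'' into $G$. Introduce a variable $p_a$ for each $a\in\Sigma$, a goal variable $g$, plus auxiliary variables; write $\widehat u$ for the encoding of a word $u$ --- either as the sequence of its letter-variables in the antecedent or, where the impoverished language forces it, as a single nested $\BS$-formula. Each rule $u_j\Rightarrow_T v_j$ of $T$ is encoded by a formula $E_j$, and $S_{s,t}$ has the shape $\widehat s,\;\One\wedge(E_1\wedge\cdots\wedge E_k\wedge F)\vdash g$, where $F$ is an ``accept'' gadget that closes a derivation once $\widehat t$ has been reached. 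Reading a derivation bottom-up, one rewrite step $w\Rightarrow_T w'$ is simulated by: duplicating the flag via $D\varepsilon$, moving a copy next to the redex via $L\varepsilon/R\varepsilon$, activating it via $\wedge L$, and letting the extracted $E_j$ perform the replacement via $\BS L$; the surviving copy carries the rule set onward. The ``$\Rightarrow$'' half of the first equivalence then follows by induction on the length of a rewriting sequence $s\Rightarrow_T^{*}t$, producing an explicit \emph{cut-free} derivation --- cut-freeness matters, since cut is not known to be admissible in $\LLe(\BS,\wedge,\One)$.

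For the second equivalence I construct, from the rewriting relation itself, a concrete L$\varepsilon$-model refuting $S_{s,t}$ whenever $s\not\Rightarrow_T^{*}t$. The guiding idea is to interpret $p_a$ and $g$ by ``ancestor'' languages --- the words that rewrite to $a$, respectively to $t$, under $\Rightarrow_T^{*}$ --- chosen so that the antecedent value of $S_{s,t}$ always contains $s$ while remaining contained in the ancestor set of $s$, and $w(g)$ is the ancestor set of $t$; then the sequent is true precisely when $s\Rightarrow_T^{*}t$. The point one must verify is that $\One\wedge(E_1\wedge\cdots\wedge E_k\wedge F)$ evaluates to $\{\varepsilon\}$ in this model --- equivalently that $\varepsilon\in w(E_j)$ for each $j$ and $\varepsilon\in w(F)$ --- which reduces to inclusions between ancestor languages that hold because $u_j\Rightarrow_T v_j$ (and may call for a somewhat refined choice of model or of the $E_j$). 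The companion soundness statement ``$s\Rightarrow_T^{*}t\Rightarrow$ true in all L$\varepsilon$-models'' is then either obtained for free from L$\varepsilon$-soundness of $\LLe(\BS,\wedge,\One)$ together with the first implication, or checked directly.

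The hard part is the design of the $E_j$ and of $F$. Unlike the Andr\'{e}ka--Mikul\'{a}s--Sain / Kurucz et al.\ construction, we have neither product nor a zero constant, so the usual ``$!\big(\mathrm{prod}(u_j)\BS\mathrm{prod}(v_j)\big)$'' rewrite gadget is unavailable: a single $\BS L$ step leaves behind exactly one formula and cannot ``emit'' a sequence of letter-variables back into the antecedent. The fix must simultaneously (i) let one activation of $\One\wedge G$ perform exactly one faithful rewrite step in the cut-free calculus --- which seems to demand representing configurations by nested $\BS$-formulas, or running the rewriting as a two-pushdown process whose ``output'' side is produced on the succedent via $\BS R$ --- and (ii) keep $\varepsilon$ inside $w(G)$ in the ancestor-model of the previous step. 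Reconciling (i) and (ii) is the crux; once it is done, the remaining danger --- that cut-free proof search in $\LLe(\BS,\wedge,\One)$ might take a ``shortcut'' not corresponding to any genuine rewriting --- costs nothing extra, since any such shortcut would contradict L$\varepsilon$-soundness of $\LLe(\BS,\wedge,\One)$ together with the refuting model of the third step.
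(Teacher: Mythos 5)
Your overall reduction scheme is exactly the right one, and it matches the paper's: produce a sequent that is derivable in the minimal calculus whenever the machine/rewriting succeeds (so it lies in every $\mathcal{L}\supseteq\LLe(\BS,\wedge,\One)$), and refuted by a concrete L$\varepsilon$-model whenever it fails (so, by L$\varepsilon$-soundness, it lies in no such $\mathcal{L}$); choosing semi-Thue reachability instead of 2-counter Minsky machines is an inessential difference. But the proposal has a genuine gap precisely at the point you yourself flag as ``the crux'': you never construct the rule gadgets $E_j$ (nor $F$), and without them neither equivalence is established. The obstacle is real --- with only $\BS$, $\wedge$, $\One$ a single $\BS L$ step consumes material and leaves one formula, so it cannot re-emit the right-hand side $v_j$ as a string of letter-variables --- and naming two speculative ways out (``nested $\BS$-formulas'' or ``output on the succedent via $\BS R$'') is not the same as verifying that either one yields a faithful, cut-free simulation \emph{and} keeps $\varepsilon$ in the interpretation of the guarded conjunction in your refuting model. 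Since cut is not known to be admissible in $\LLe(\BS,\wedge,\One)$, the forward simulation must be exhibited explicitly, and the semantic side must be checked against the same gadget; both checks are missing.

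For comparison, the paper resolves this crux with the relative double negation $\Phi^{bb}=(\Phi\BS b)\BS b$: a rule is encoded as $\Psi\BS\Phi^{bb}$, and its activation turns the goal into $\One\wedge G,\Delta\vdash\Phi\BS b$, whose $\BS R$-unfolding re-introduces the whole string $\Phi$ as fresh antecedent material at the \emph{opposite} end of the sequent --- this is how a multi-letter right-hand side gets ``emitted'' without product. The price is that rewriting happens only cyclically, which is paid for by adding the conjuncts $q\BS q^{bb}$ for every variable $q$ (giving cyclic rotation of the antecedent, Corollary~\ref{Cor:Postish}) and, on the semantic side, by interpreting $b$ as the closure of the set of terminating configuration words under cyclic transpositions, with end-markers $e_1,e_2$ preventing spurious splits; Lemma~\ref{Lm:Geps} then shows $w(\One\wedge G)=\{\varepsilon\}$ for exactly these gadgets. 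Your ``ancestor-language'' model does not address the cyclicity that any succedent-side emission forces, and your letter-by-letter antecedent encoding does not explain how $w(E_j)$ can contain $\varepsilon$ while still blocking non-rewrites. So the architecture of your argument is sound, but the construction that would make it a proof --- the analogue of the $\Psi\BS\Phi^{bb}$ gadget together with the matching countermodel --- is not supplied.
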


Our undecidability proof is based on encoding computations
of 2-counter Minsky machines~\cite{Kanovich1995Minsky}. In the forward encoding, from Minsky computations to derivations in our calculus, we present explicit derivations in $\LLe(\BS,\wedge,\One)$. For the backwards direction, from derivations to computations, we use a semantic approach using L-models (cf.~\cite{Lafont,LafontScedrov,OkadaTerui}, where phase semantics was used for similar purposes). Thus, we get undecidability not only for $\LLe(\BS,\wedge,\One)$ itself, but for the whole range of its L$\varepsilon$-sound extensions.

Before going further, let us introduce the {\em relative double negation} construction. We fix a variable (atomic proposition) $b$ and define relative negation $A^b$ as
$$
A^b = A \BS b.
$$
The term ``negation'' here is motivated as follows. In linear logic with the falsity constant $\bot$, negation is expressed as $A^\bot = A \imp \bot$. Here we do the same non-commutatively, but due to lack of the $\bot$ constant we replace it by a fixed variable. This is the minimal logic approach: variable $b$ can be read as ``false,'' but no specific axioms like $b \vdash A$ ({\em ex falso}) are imposed for $b$.

The relative double negation now is
$$
A^{bb} = (A \BS b) \BS b.
$$
Notice the difference from the more usual in the Lambek calculus ``type raising'' version of something like double negation: ${}^b A^{b} = b \SL (A \BS b)$. 
In our setting, we have neither $A^{bb} \vdash A$ (due to the intuitionistic nature of the Lambek calculus), nor $A \vdash A^{bb}$ (due to non-commutativity; in contrast, $A \vdash {}^b A^{b}$ is derivable). Nevertheless, $A^{bb}$ will be useful for our construction.

Given a sequence of formulae $\Phi = A_1, A_2, \ldots, A_{m-1}, A_m$ and a formula $C$, we introduce the notation
$$
\Phi \BS C = A_m \BS (A_{m-1} \BS \ldots \BS (A_2 \BS (A_1 \BS C)) \ldots ).
$$
In particular,
$$
\Phi^b = A_m \BS (A_{m-1} \BS \ldots \BS (A_2 \BS (A_1 \BS b)) \ldots )
$$
and 
$$
\Phi^{bb} = \bigl( A_m \BS (A_{m-1} \BS \ldots \BS (A_2 \BS (A_1 \BS b)) \ldots ) \bigr) \BS b.
$$

In what follows, we suppose that the $^{bb}$ operation has a higher priority than ordinary division $\BS$.

Consider a non-deterministic Minsky machine $\mathfrak{M}$ with a finite set of states $\{ L_0, L_1, \ldots, L_n \}$. A configuration of $\mathfrak{M}$ is a triple $(L_i, k_1, k_2)$, where $L_i$ is the current state and $k_1$ and $k_2$ are the current values of $\mathfrak{M}$'s two counters. The counters themselves are denoted by $c_1$ and $c_2$. The configuration $(L_0,0,0)$ is considered the {\em final} one; the initial configuration can be taken arbitrarily. 

Configurations of Minsky machines are encoded as follows. We introduce distinct variables $e_1$, $e_2$, $p_1$, $p_2$, $l_0$, $l_1$, \ldots, $l_n$ and represent configuration $(L_i, k_1, k_2)$ as
$$
e_1, \underbrace{p_1, \ldots, p_1}_{\text{$k_1$ times}}, l_i,
\underbrace{p_2, \ldots, p_2}_{\text{$k_2$ times}}, e_2.
$$
In particular, the final configuration $(L_0,0,0)$ is represented as $e_1, l_0, e_2$.

Minsky instructions are encoded according to the following table:
\begin{center}
 \begin{tabular}{c|c}
 {\em Instruction $I$} & {\em Formula $A_I$} \\ \hline
{\sc inc}$(L_i,1,L_j)$  & %at state $L_i$, increase $c_1$ by 1 and go to $L_j$ & 
 $l_i \BS (p_1, l_j)^{bb}$ \\
{\sc inc}$(L_i,2,L_j)$ &  %at state $L_i$, increase $c_2$ by 1 and go to $L_j$ & 
  $l_i \BS (l_j, p_2)^{bb}$ \\
{\sc dec}$(L_i,1,L_j)$ &  %at  $L_i$, decrease $c_1$ by 1 and go to $L_j$ 
 % (if $k_1 = 0$, $I$ cannot be applied) & 
  $(p_1, l_i) \BS l_j^{bb}$ \\
{\sc dec}$(L_i,2,L_j)$ &  %at  $L_i$, decrease $c_2$ by 1 and go to $L_j$ 
 % (if $k_2 = 0$, $I$ cannot be applied) &
  $(l_i, p_2) \BS l_j^{bb}$ \\
{\sc jz}$(L_i, 1, L_j)$ &  %zero-test: at  $L_i$, if $k_1 = 0$ go to $L_j$
 % (if $c_1 \ne 0$, $I$ cannot be applied) & 
  $(e_1, l_i) \BS (e_1, l_j)^{bb}$ \\
{\sc jz}$(L_i, 2, L_j)$ &  %zero-test: at  $L_i$, if $k_2 = 0$ go to $L_j$
 % (if $c_2 \ne 0$, $I$ cannot be applied) &
  $(l_i, e_2) \BS (l_j, e_2)^{bb}$
 \end{tabular}
\end{center}

Here instruction {\sc inc}$(L_i,r,L_j)$ {\em (increment)} means ``at state $L_i$, increase $c_r$ by 1 and go to $L_j$'' ($r = 1,2$). Instruction {\sc dec}$(L_i,r,L_j)$ {\em (decrement)} means ``at state $L_i$, decrease $c_r$ by 1 and go to $L_j$.'' If $k_r = 0$, then this instruction cannot be applied. Finally, {\sc jz}$(L_i,r,L_j)$ {\em (zero-test)} means ``at state $L_i$, if $k_r = 0$, go to $L_j$.'' Now if $k_r \ne 0$, then the instruction cannot be applied.

Notice that our version of zero-test and decrement instructions are very restrictive. Once the counter has a wrong value (zero for decreasing or non-zero for zero-test), the machine just fails to proceed. Usually, in such cases the machine is allowed to perform conditional branching (e.g., zero-test jumps to $L_j$ if the counter is zero and safely stays at $L_i$ if not). These restrictions, however, are compensated by the allowed non-determinism of $\mathfrak{M}$. Indeed, the compound {\sc jzdec}$(L_i, r, L_{j_1}, L_{j_2})$ instruction from Minsky's original formalism~\cite{Minsky1961}, meaning ``at state $L_i$, if $k_r \ne 0$, decrease $c_r$ by one and go to $L_{j_1}$, and if $k_r = 0$, go to $L_{j_2}$,'' is modelled by adding simultaneously two instructions: 
{\sc dec}$(L_i, r, L_{j_1})$ and {\sc jz}$(L_i, r, L_{j_2})$. This non-deterministically branches computation; however, exactly one branch (depending on whether $k_r = 0$) could be successful, the other one immediately fails.

Let us denote the set of our variables, except $b$,  by $\Var$:
$$
\Var = \{ e_1, e_2, p_1, p_2, l_0, l_1, \ldots, l_n \}.
$$

Finally, the Minsky machine $\mathfrak{M}$ is represented by the following formula
$$
G = ((e_1, l_0, e_2) \BS b) \wedge \bigwedge_{I} A_I \wedge \bigwedge_{q \in \Var} (q \BS q^{bb}).
$$
Here in the first big conjunction $I$ ranges among all instructions of $\mathfrak{M}$.

Now we are ready to state our main encoding theorems.

\begin{theorem}\label{Th:encforw}
If $\mathfrak{M}$ can reach the final configuration $(L_0,0,0)$, starting from $(L_i,k_1,k_2)$, then the following sequent is derivable in $\LLe(\BS,\wedge,\One)$:
$$
\One \wedge G, e_1, \underbrace{p_1, \ldots, p_1}_{\text{$k_1$ times}}, l_i,
\underbrace{p_2, \ldots, p_2}_{\text{$k_2$ times}}, e_2 \vdash b.\eqno{(*)}
$$
\end{theorem}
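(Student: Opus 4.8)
The plan is to proceed by induction on the length of the successful computation of $\mathfrak{M}$ from $(L_i,k_1,k_2)$ to the final configuration $(L_0,0,0)$. For each configuration $(L_i,k_1,k_2)$ along the way, write $\Phi_{i}$ for the corresponding sequence of variables $e_1, p_1,\ldots,p_1, l_i, p_2,\ldots, p_2, e_2$, so that the target sequent is $\One \wedge G, \Phi_i \vdash b$. The key observation is that $\One \wedge G$ should be thought of as a reusable ``resource'': since $G$ is a conjunction, $\wedge L_l / \wedge L_r$ let us extract any one conjunct, and the $D\varepsilon$ rule lets us duplicate $\One \wedge G$ as many times as we have computation steps. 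The $L\varepsilon$ and $R\varepsilon$ rules let us shuttle a copy of $\One \wedge G$ to whatever position in the antecedent it is needed. So the structural scaffolding of the derivation is: first apply $D\varepsilon$ enough times to produce one copy of $\One \wedge G$ per computation step, then for each step move a copy into place with $L\varepsilon/R\varepsilon$ and consume it via $\wedge L$ to expose the appropriate instruction formula $A_I$.

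The base case is the final configuration itself: $\One \wedge G, e_1, l_0, e_2 \vdash b$. Here we select the conjunct $(e_1,l_0,e_2)\BS b = e_3\BS(l_0\BS(e_1\BS b))$ from $G$ via repeated $\wedge L_l$, move it to the right end of the antecedent by $R\varepsilon$, and then peel off the three $\BS$'s with $\BS L$ against the axioms $e_1\vdash e_1$, $l_0\vdash l_0$, $e_2 \vdash e_2$, leaving $b\vdash b$. For the inductive step, suppose the last instruction applied was, say, {\sc inc}$(L_i,1,L_j)$, taking $(L_i,k_1,k_2)$ to $(L_j,k_1+1,k_2)$; by the induction hypothesis $\One\wedge G, \Phi_j \vdash b$ is derivable, where $\Phi_j = e_1, p_1^{k_1+1}, l_j, p_2^{k_2}, e_2$. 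We must derive $\One\wedge G, e_1, p_1^{k_1}, l_i, p_2^{k_2}, e_2 \vdash b$. The idea is to use the conjunct $A_I = l_i \BS (p_1,l_j)^{bb} = l_i\BS\bigl((l_j\BS(p_1\BS b))\BS b\bigr)$: extract it via $\wedge L$, position it just after the $l_i$ in the antecedent using the commuting rules (having first duplicated $\One\wedge G$ so a fresh copy remains for the rest of the derivation), and apply $\BS L$ with $l_i \vdash l_i$ on the left. This replaces $l_i$ by $(p_1,l_j)^{bb}$; then one more $\BS L$ against a derivation of $\One\wedge G, e_1, p_1^{k_1}, (l_j\BS(p_1\BS b)), p_2^{k_2}, e_2\vdash b$ — wait, more carefully: the $^{bb}$ wrapper means we apply $\BS L$ to $(l_j\BS(p_1\BS b))\BS b$, whose left premise is $l_j\BS(p_1\BS b) \vdash l_j\BS(p_1\BS b)$ — no; rather we need the left premise to derive the formula $l_j\BS(p_1\BS b)$ from the surrounding context, which unfolds (via $\BS R$, twice) into exactly $\One\wedge G, e_1, p_1^{k_1}, p_1, l_j, p_2^{k_2}, e_2\vdash b = \One\wedge G,\Phi_j\vdash b$, supplied by the induction hypothesis, while the right premise is $b\vdash b$. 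The $^{bb}$ double negation is precisely the device that lets a ``go to $L_j$'' obligation be discharged by a derivation of the $L_j$-configuration sequent rather than by matching a single formula. The {\sc dec} and {\sc jz} cases are analogous, with the $\BS$-prefix of $A_I$ consuming the pair $(p_1,l_i)$ or $(e_1,l_i)$ at once instead of just $l_i$; the $p_r\BS p_r^{bb}$ conjuncts handle bookkeeping when a $p_r$ must itself be wrapped in double negation to line up with the shape of a premise. Finally, the case where the computation step re-enters a state already visited is no different, since $\One\wedge G$ is regenerated at each step.

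The main obstacle I expect is purely bookkeeping: getting the $L\varepsilon$, $R\varepsilon$, $D\varepsilon$ applications to correctly shuttle and multiply the $\One\wedge G$ block so that, at every $\BS L$ application, a fresh copy is still present in both premises for the remaining work, and verifying that the unfolding of each $A_I$ against its neighboring variables via $\BS L$ and the axioms/$\One$-axioms leaves exactly the induction-hypothesis sequent (modulo the $^{bb}$ unwrapping via $\BS R$). None of these steps is deep, but the indexing — tracking which $p_1$'s and $p_2$'s sit where, and matching the associativity convention $\Phi\BS C = A_m\BS(\cdots\BS(A_1\BS C))$ against the order in which $\BS L$ peels formulas off the antecedent — needs to be carried out with care. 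A clean way to organize it is to prove, as the actual induction loading, the slightly more general statement that for \emph{any} reachable configuration the corresponding sequent with a \emph{single} leading $\One\wedge G$ is derivable, and to observe that the very first thing we do in the step case is $D\varepsilon$ to split it into two.
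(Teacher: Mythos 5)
Your overall architecture matches the paper's: induction on the length of the computation, $\One\wedge G$ treated as a reusable resource regenerated by $D\varepsilon$ and shuttled by $L\varepsilon/R\varepsilon$, the explicit base case for $(L_0,0,0)$, and $^{bb}$ as the device that lets a ``go to $L_j$'' obligation be discharged by the induction-hypothesis sequent. But the inductive step as you describe it has a genuine gap, and it is not mere bookkeeping. You position $A_I$ ``just after the $l_i$'' and then unwrap $(\Phi\BS b)\BS b$ \emph{in the middle} of the antecedent. The rule $\BS L$ applied there has right premise of the form $\Gamma, b, \Delta \vdash b$ where $\Delta = p_2,\ldots,p_2,e_2$ is whatever stands to the right of the instruction formula; since these are bare variables (not of the form $\One\wedge(\cdot)$), no rule of $\LLe(\BS,\wedge,\One)$ can touch them, there is no weakening, and $b, p_2,\ldots,p_2,e_2\vdash b$ is simply underivable. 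The right premise can be the axiom $b\vdash b$ only if the $^{bb}$-formula sits at the \emph{right end} of the antecedent, which $l_i$ essentially never does.

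The missing idea is cyclic rotation of the antecedent, which is precisely what the conjuncts $q\BS q^{bb}$ are for (you mention them, but assign them a different and vaguer role). The paper first proves the key lemma: if $G$ contains $\Psi\BS\Phi^{bb}$, then $\One\wedge G,\Delta,\Psi\vdash b$ is derivable from $\One\wedge G,\Phi,\Delta\vdash b$ --- note that $\Psi$ must stand at the far right, and $\Phi$ reappears at the far left after the $\BS R$ and $R\varepsilon$ steps in the left premise. Instantiating this with $\Phi=\Psi=q$ and iterating yields the rotation: $\One\wedge G,\Delta_2,\Delta_1\vdash b$ is derivable from $\One\wedge G,\Delta_1,\Delta_2\vdash b$. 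A single computation step is then: rotate $(*')$ to bring $\Phi$ to the front, apply the key lemma to replace $\Phi$ by $\Psi$ at the back, and rotate again to return $\Psi$ to its configuration position. Without this rotation scaffolding your $\BS L$ applications cannot be completed, so you should restructure the inductive step around these two auxiliary statements rather than applying $A_I$ in place.
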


\begin{theorem}\label{Th:encback}
 If the sequent $(*)$ is true in all L$\varepsilon$-models, then $\mathfrak{M}$ can reach $(L_0,0,0)$ from $(L_i,k_1,k_2)$.
\end{theorem}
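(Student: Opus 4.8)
The plan is to prove Theorem~\ref{Th:encback} in contrapositive form: assuming that $\Mf$ \emph{cannot} reach $(L_0,0,0)$ from $(L_i,k_1,k_2)$, I would exhibit a single L$\varepsilon$-model falsifying $(*)$. This follows the ``standard model'' (phase-semantics) method of \cite{Lafont,LafontScedrov,OkadaTerui}, adapted to the non-commutative, product-free language. Take the alphabet $\Sigma = \Var$, interpret each $q \in \Var$ by the singleton language $\{q\}$, and interpret the variable $b$ by a \emph{pole} language $N \subseteq \Sigma^{*}$ defined as follows. Let $\Rightarrow$ be the rewriting relation on $\Sigma^{*}$ obtained by closing, under arbitrary left and right context, the rules that mirror the forward action of the six instruction types on configuration words: $l_i \Rightarrow p_1\, l_j$ and $l_i \Rightarrow l_j\, p_2$ for the two increments, $p_1\, l_i \Rightarrow l_j$ and $l_i\, p_2 \Rightarrow l_j$ for the two decrements, and $e_1\, l_i \Rightarrow e_1\, l_j$ and $l_i\, e_2 \Rightarrow l_j\, e_2$ for the two zero-tests. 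Put $N = \{\, u \in \Sigma^{*} \mid u \Rightarrow^{*} e_1\, l_0\, e_2 \,\}$, with $\Rightarrow^{*}$ the reflexive--transitive closure. Since $\One$ is forced to denote $\{\varepsilon\}$ in every L$\varepsilon$-model, these choices determine an L$\varepsilon$-model $w$.

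The first step is to verify that $\varepsilon \in w(G)$. Then, as $\wedge$ is interpreted by intersection and $w(\One) = \{\varepsilon\}$, we get $w(\One \wedge G) = \{\varepsilon\}$, so $(*)$ holds in $w$ exactly when the single word $e_1\, p_1^{k_1}\, l_i\, p_2^{k_2}\, e_2$ lies in $N$. To get $\varepsilon \in w(G)$ one checks that $\varepsilon$ is in every conjunct of $G$: for $(e_1, l_0, e_2) \BS b$ this is $e_1\, l_0\, e_2 \in N$, true by reflexivity of $\Rightarrow^{*}$; for an instruction conjunct $A_I$, unfolding the definitions of $\BS$ and of the relative double negation $(\cdot)^{bb}$ in the model shows $\varepsilon \in w(A_I)$ to be equivalent to a one-step closure property of $N$ --- e.g.\ for {\sc inc}$(L_i,1,L_j)$ one needs $l_i\, v \in N$ whenever $p_1\, l_j\, v \in N$, which holds since $l_i\, v \Rightarrow p_1\, l_j\, v$; the remaining five cases are entirely parallel; and the conjuncts $q \BS q^{bb}$ hold trivially because each variable is a singleton. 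Indeed, the formulas $A_I$ are built precisely so that ``$\varepsilon \in w(A_I)$'' reads as closure of the pole under the rewriting step coding instruction $I$.

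The substantive step, and the expected main obstacle, is the converse: $e_1\, p_1^{k_1}\, l_i\, p_2^{k_2}\, e_2 \in N$ must force $\Mf$ to halt from $(L_i,k_1,k_2)$. For this I would establish an \emph{invariant} for $\Rightarrow$ on words reachable from configuration words: every such word again has the form $e_1\, p_1^{a}\, l_m\, p_2^{c}\, e_2$, and every single $\Rightarrow$-step between two such words transcribes a legitimate step of $\Mf$ (in particular, a rule fires only when its ``state'' symbol matches the unique $l$-symbol present). The key point is that in a word of this shape the left-hand side of each rule can match only in its forced position: $p_1\, l_i$ only at the junction between the $p_1$-block and the state, hence only when $a \ge 1$ --- which reproduces the decrement precondition --- and $e_1\, l_i$ only when $a = 0$ --- which reproduces the zero-test precondition --- and symmetrically on the $p_2$ side, while the markers $e_1$ and $e_2$ are never created, deleted, or moved. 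Hence any rewriting $e_1\, p_1^{k_1}\, l_i\, p_2^{k_2}\, e_2 \Rightarrow^{*} e_1\, l_0\, e_2$ is literally a halting computation of $\Mf$. Carrying out this bookkeeping --- the delicate part being to rule out any ``illegal'' rewriting step with no corresponding instruction --- finishes the proof: if $\Mf$ does not halt from $(L_i,k_1,k_2)$, then $e_1\, p_1^{k_1}\, l_i\, p_2^{k_2}\, e_2 \notin N$, so $(*)$ is false in $w$, contradicting its truth in all L$\varepsilon$-models.
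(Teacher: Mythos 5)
There is a genuine gap, and it sits exactly where you say the cases are ``entirely parallel'' and ``trivial.'' Unfold $\varepsilon \in w(A_I)$ carefully for $A_I = \Psi \BS \Phi^{bb}$ in the model: since $A \BS B = \{u \mid \forall v \in A,\ vu \in B\}$, you get $\varepsilon \in w(A_I)$ iff $\Psi \in w((\Phi\BS b)\BS b)$, i.e.\ iff for every $\Delta$ with $\Phi\Delta \in w(b)$ one has $\Delta\Psi \in w(b)$ --- with $\Psi$ appended on the \emph{right}, not substituted on the left as in your reading ``$l_i v \in N$ whenever $p_1 l_j v \in N$.'' The same computation for the conjuncts $q \BS q^{bb}$ gives the condition ``$qv \in w(b)$ implies $vq \in w(b)$,'' i.e.\ $w(b)$ must be closed under one-letter cyclic rotation; this is not trivial and it fails for your pole $N = \{u \mid u \Rightarrow^{*} e_1 l_0 e_2\}$: we have $e_1 l_0 e_2 \in N$ but $l_0 e_2 e_1 \notin N$ (your rewriting never creates, deletes, or permutes $e_1$ and $e_2$). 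Hence $\varepsilon \notin w(q \BS q^{bb})$ for $q = e_1$, so $w(\One \wedge G) = \varnothing$ rather than $\{\varepsilon\}$, the antecedent of $(*)$ denotes $\varnothing$, and $(*)$ is \emph{vacuously true} in your model --- it falsifies nothing. This rotation issue is not incidental: the encoding uses only $\BS$, so the double relative negation $(\cdot)^{bb}$ necessarily moves material from the left end of a word to the right end, and the pole must absorb that.

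The paper's fix is to take $w(b)$ to be the set of all cyclic rotations $\Xi\Upsilon$ of terminating configuration words $\Upsilon\Xi \in B_{\Mf}$. Then $q\Delta \in w(b) \Rightarrow \Delta q \in w(b)$ holds by construction, and the instruction condition is verified by writing $\Phi\Delta \in w(b)$ as $\Delta_2\Phi\Delta_1 \in B_{\Mf}$ with $\Delta = \Delta_1\Delta_2$ (the markers $e_1, e_2$ prevent $\Phi$ from being split), stepping the machine backwards to get $\Delta_2\Psi\Delta_1 \in B_{\Mf}$, and rotating to obtain $\Delta_1\Delta_2\Psi = \Delta\Psi \in w(b)$. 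The same markers guarantee in the final step that $e_1 p_1^{k_1} l_i p_2^{k_2} e_2 \in w(b)$ admits only the trivial rotation, so it lies in $B_{\Mf}$ itself. Your closing rewriting-invariant argument is a reasonable (if more laborious) substitute for defining the pole directly from machine reachability, but it does not repair the model-verification step; once you close $N$ under cyclic rotations and restrict it to rotations of genuine configuration words, you have in effect reconstructed the paper's $w(b)$.
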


Notice that our encodings are in a sense ``upside-down'': the starting configuration corresponds to the goal sequent in our derivation, and the sequent encoding the final configuration $(L_0,0,0)$ is on the top of the derivation, very close to axioms (see proof of Theorem~\ref{Th:encforw} below). The right intuition here is to consider the derivation in the direction of proof search, developing from the goal up to axioms.
This direction correctly reflects the direction of Minsky computation.

Theorem~\ref{Th:undec} (our undecidability result) immediately follows from Theorems~\ref{Th:encforw} and~\ref{Th:encback}. Indeed, if $\Lc$ is a logic which is L$\varepsilon$-sound and includes $\LLe(\BS,\wedge,\One)$, then $(*)$ is provable in $\Lc$ if and only if $\Mf$ can reach $(L_0,0,0)$ from $(L_1,k_1,k_2)$. Indeed, the ``if'' direction is by Theorem~\ref{Th:encforw}, and 
the ``only if'' direciton is by Theorem~\ref{Th:encback}. Since reachability in Minsky computations is undecidable, we get undecidability of $\Lc$.

Before proving Theorems~\ref{Th:encforw} and~\ref{Th:encback}, we establish several technical results.

Notice that each formula in the big conjunction $G$, except the first one, is of the form $G_{\Phi,\Psi} = \Psi \BS \Phi^{bb}$. 
The key lemma for such formulae, in the view of Theorem~\ref{Th:encforw}, is as follows.

\begin{lemma}\label{Lm:G}
 If the big conjunction $G$ includes $G_{\Phi,\Psi}$ and 
 $\One \wedge G, \Phi, \Delta \vdash b$ is derivable in $\LLe(\BS,\wedge,\One)$, then so is $\One \wedge G, \Delta, \Psi \vdash b$.
\end{lemma}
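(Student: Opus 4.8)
The plan is to show that $\One \wedge G, \Delta, \Psi \vdash b$ reduces, by a chain of cut-free inferences, to the assumed-derivable sequent $\One \wedge G, \Phi, \Delta \vdash b$ together with instances of the axiom $Id$; the inferences used are only the structural $\One$-rules $L\varepsilon$, $R\varepsilon$, $D\varepsilon$, the left rules for $\wedge$, both rules for $\BS$, and $Id$ itself. Write $\Phi = A_1, \dots, A_m$ and $\Psi = B_1, \dots, B_k$, so that, unfolding the notation, $G_{\Phi,\Psi} = B_k \BS \bigl(B_{k-1} \BS \dots \BS (B_1 \BS \Phi^{bb}) \dots\bigr)$ and $\Phi^{bb} = \Phi^b \BS b$ with $\Phi^b = A_m \BS \bigl(A_{m-1} \BS \dots \BS (A_1 \BS b) \dots\bigr)$.

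Read from the goal upwards, i.e.\ in the direction of proof search, the derivation proceeds as follows. (i) Apply $D\varepsilon$ to the occurrence of $\One \wedge G$, reducing to $\One \wedge G, \One \wedge G, \Delta, \Psi \vdash b$; this is the only place where the doubling rule is needed. (ii) Apply $L\varepsilon$ repeatedly to slide the second copy of $\One \wedge G$ rightward past every formula of $\Delta$ and then of $\Psi$, reducing to $\One \wedge G, \Delta, \Psi, \One \wedge G \vdash b$. (iii) Apply the left conjunction rules to the trailing $\One \wedge G$, peeling the conjunction $\One \wedge (\dots)$ down to the single conjunct $G_{\Phi,\Psi}$ — legitimate because, by hypothesis, $G_{\Phi,\Psi}$ is one of the conjuncts of $G$ — reducing to $\One \wedge G, \Delta, \Psi, G_{\Phi,\Psi} \vdash b$. (iv) Apply $\BS L$ a total of $k$ times, consuming $B_k, B_{k-1}, \dots, B_1$ against the successive leading divisors of $G_{\Phi,\Psi}$, each left premise being the axiom $B_i \vdash B_i$, reducing to $\One \wedge G, \Delta, \Phi^{bb} \vdash b$. (v) Apply $\BS L$ once to $\Phi^{bb} = \Phi^b \BS b$, sending the entire left part $\One \wedge G, \Delta$ into the left premise so that the right premise is exactly the axiom $b \vdash b$, reducing to $\One \wedge G, \Delta \vdash \Phi^b$. (vi) Apply $\BS R$ a total of $m$ times, moving the divisors $A_m, \dots, A_1$ of $\Phi^b$ into the antecedent, reducing to $A_1, \dots, A_m, \One \wedge G, \Delta \vdash b$, that is, $\Phi, \One \wedge G, \Delta \vdash b$. (vii) Apply $R\varepsilon$ repeatedly to slide $\One \wedge G$ leftward past every formula of $\Phi$, reducing to $\One \wedge G, \Phi, \Delta \vdash b$, which is the assumed sequent.

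Each of the seven reductions is one instance, or a short chain of instances, of a single rule, and the side conditions can be checked by inspection of occurrence positions. I do not expect any genuine obstacle here: all the conceptual work is already built into the calculus and into the shape of $G_{\Phi,\Psi}$ — the rules $L\varepsilon$, $R\varepsilon$, $D\varepsilon$ say precisely that $\One \wedge G$ acts as a two-sided central idempotent, so it may be duplicated and slid anywhere in the antecedent, while the relative double negation $\Phi^{bb} = (\Phi \BS b) \BS b$ is engineered exactly so that, via one $\BS L$ and then $m$ applications of $\BS R$, it re-injects the block $\Phi$ on the right of an already-derivable sequent ending in $b$. The only points demanding a little care are: that step (v) must place all of $\One \wedge G, \Delta$ in the left premise (otherwise the right premise would be $\Gamma, b \vdash b$ with $\Gamma$ nonempty, which need not be an axiom); that in steps (iv) and (vi) the divisors must be peeled in the order dictated by the bracketing of $\Psi \BS (-)$ and $(-) \BS b$; and that the whole argument stays within the cut-free system — which it does, since every step is a left rule, a right rule for $\BS$, a structural $\One$-rule, or an axiom, and $\One$ never needs to occur bare to the left of a formula.
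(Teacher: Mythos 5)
Your derivation is correct and is essentially the paper's own proof: duplicate $\One\wedge G$ by $D\varepsilon$, move the copy to the right end by $L\varepsilon$, select the conjunct $G_{\Phi,\Psi}$, consume $\Psi$ by $\BS L$, exploit $\Phi^{bb}=(\Phi\BS b)\BS b$ via one $\BS L$ against the axiom $b\vdash b$ followed by $\BS R$, and return $\One\wedge G$ to the front by $R\varepsilon$. The only (immaterial) difference is that you slide the full copy of $\One\wedge G$ rightward before peeling it down to $G_{\Phi,\Psi}$, whereas the paper first narrows it to $\One\wedge G_{\Phi,\Psi}$ (keeping the $\One\wedge$ wrapper so the commuting rules still apply) and slides afterwards.
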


\begin{proof}
 The derivation is as follows:
 $$
 \infer[D\varepsilon]{\One \wedge G, \Delta, \Psi \vdash b}
 {\infer[\wedge L\text{ \small several times}]
 {\One \wedge G, \One \wedge G, \Delta, \Psi, \vdash b}
 {\infer[L\varepsilon\text{ \small several times}]
 {\One \wedge G, \One \wedge (\Psi \BS \Phi^{bb}), \Delta, \Psi \vdash b}
 {\infer[\wedge L_r]{\One \wedge G, \Delta, \Psi, \One \wedge (\Psi \BS \Phi^{bb}) \vdash b}
 {\infer[\BS L]{\One \wedge G, \Delta, \Psi, \Psi \BS \Phi^{bb} \vdash b}
 {\Psi \vdash \Psi & 
 \infer[\BS L]{\One \wedge G, \Delta, (\Phi \BS b) \BS b \vdash B}
 {\infer[\BS R]{\One \wedge G, \Delta \vdash \Phi \BS b}
 {\infer[R \varepsilon\text{ \small several times}]{\Phi, \One \wedge G, \Delta \vdash b}
 {\One \wedge G, \Phi, \Delta \vdash b} & b \vdash b}}}}}}}
 $$
\end{proof}

\begin{corollary}[``Post-ish productions'']\label{Cor:Postish}
 Let $\Delta_1$ and $\Delta_2$ be sequences of variables from $\Var$ (no complex formulae). Then, provided that $G$ includes $q \BS q^{bb}$ for any $q \in \Var$, the sequent $\One \wedge G, \Delta_2, \Delta_1 \vdash b$ is derivable in $\LLe(\BS,\wedge,\One)$ from
 $\One \wedge G, \Delta_1, \Delta_2 \vdash b$.
\end{corollary}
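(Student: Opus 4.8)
The plan is to reduce the corollary to repeated applications of Lemma~\ref{Lm:G}, specialized to the case where both $\Phi$ and $\Psi$ consist of a single variable. Concretely, if $q \in \Var$, then $q \BS q^{bb}$ is exactly $G_{q,q} = \Psi \BS \Phi^{bb}$ with $\Phi = \Psi = q$ regarded as length-one sequences, so Lemma~\ref{Lm:G} tells us that from $\One \wedge G, q, \Delta \vdash b$ we can derive $\One \wedge G, \Delta, q \vdash b$ in $\LLe(\BS,\wedge,\One)$. In words: a single leading variable may be shifted all the way to the right end of the antecedent, at the cost of nothing but the already-available formula $q \BS q^{bb}$ inside $G$.

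I would then prove the corollary by induction on $|\Delta_1|$. If $\Delta_1$ is empty, the two sequents coincide and there is nothing to prove. For the inductive step, write $\Delta_1 = q, \Delta_1'$ with $q \in \Var$ and $\Delta_1'$ again a sequence of variables. Starting from the assumed derivation of $\One \wedge G, q, \Delta_1', \Delta_2 \vdash b$, apply the specialized Lemma~\ref{Lm:G} with $\Phi = \Psi = q$ and $\Delta = \Delta_1', \Delta_2$ to obtain $\One \wedge G, \Delta_1', \Delta_2, q \vdash b$. Now $\Delta_2, q$ is still a sequence of variables, so the induction hypothesis applied to $\Delta_1'$ (as the new $\Delta_1$) and $\Delta_2, q$ (as the new $\Delta_2$) yields $\One \wedge G, \Delta_2, q, \Delta_1' \vdash b$, which is precisely $\One \wedge G, \Delta_2, \Delta_1 \vdash b$.

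There is no serious obstacle here: Lemma~\ref{Lm:G} already carries all the weight, and the only point requiring a little care is the bookkeeping of the order in which the variables of $\Delta_1$ are moved. One peels them off from the left, one at a time, appending each to the far right, so that after $|\Delta_1|$ such shifts the whole block $\Delta_1$ reappears, in its original internal order, immediately to the right of $\Delta_2$ (equivalently, one may simply iterate the one-variable shift $|\Delta_1|$ times without dressing it up as an induction). Note also that the argument uses only that the entries of $\Delta_1$ lie in $\Var$, so that the corresponding formulae $q \BS q^{bb}$ occur in $G$; the shape of $\Delta_2$ is irrelevant, and in particular $\Delta_2$ need not consist of variables for the iteration to go through.
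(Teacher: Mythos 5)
Your proposal is correct and matches the paper's own proof, which likewise reduces to the single-variable case by applying Lemma~\ref{Lm:G} with $\Phi = \Psi = q$ and then iterates by induction on the length of $\Delta_1$. Your write-up is in fact more explicit than the paper's about the bookkeeping of the induction (absorbing the shifted variable into $\Delta_2$), and your side remark that $\Delta_2$ need not consist of variables is a harmless, valid observation since Lemma~\ref{Lm:G} imposes no condition on $\Delta$.
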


\begin{proof}
 It is sufficient to consider the case of $\Delta_1 = q$; then we proceed by induction on the length of $\Delta_1$. For $\Delta_1 = q$, we apply Lemma~\ref{Lm:G} with $\Phi = \Psi = q$.
\end{proof}

\begin{corollary}[One step of Minsky computation]\label{Cor:step}
 Suppose the Minsky machine $\mathfrak{M}$ can make a computation step from configuration $(L_i, k_1, k_2)$ to 
 configuration $(L_{i'}, k'_1, k'_2)$, and let $(*')$ be the instance of $(*)$ for $(L_{i'}, k'_1, k'_2)$. Then $(*)$ is derivable from $(*')$ in $\LLe(\BS,\wedge,\One)$.
\end{corollary}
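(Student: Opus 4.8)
The plan is to handle each of the six instruction types of $\mathfrak{M}$ separately, in each case exhibiting a derivation of $(*)$ from $(*')$ in $\LLe(\BS,\wedge,\One)$ that mimics the corresponding computation step. The workhorse will be Lemma~\ref{Lm:G}, applied to the conjunct $G_{\Phi,\Psi}$ encoding the instruction, together with Corollary~\ref{Cor:Postish}, which lets us freely cyclically permute blocks of variables in the antecedent (this is needed because Lemma~\ref{Lm:G} replaces $\Phi$ at the \emph{front} of the antecedent-tail with $\Psi$ at the \emph{back}, so we repeatedly rotate the encoding string to bring the relevant block into position).

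First I would recall that $G$ contains $G_{\Phi,\Psi} = \Psi \BS \Phi^{bb}$ for each of the instruction formulae $A_I$ in the table, so Lemma~\ref{Lm:G} is applicable with the $\Phi,\Psi$ read off that table. For a concrete case, take {\sc inc}$(L_i,1,L_j)$, with $A_I = l_i \BS (p_1, l_j)^{bb}$, i.e.\ $\Psi = l_i$ and $\Phi = p_1, l_j$. The sequent $(*')$ is $\One \wedge G, e_1, p_1^{k_1+1}, l_j, p_2^{k_2}, e_2 \vdash b$. Using Corollary~\ref{Cor:Postish} I rotate the antecedent-tail so that the block $p_1, l_j$ sits at the very front: $\One \wedge G, p_1, l_j, p_2^{k_2}, e_2, e_1, p_1^{k_1} \vdash b$. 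Now Lemma~\ref{Lm:G} with $\Phi = p_1, l_j$, $\Psi = l_i$ turns this into $\One \wedge G, p_2^{k_2}, e_2, e_1, p_1^{k_1}, l_i \vdash b$, and one more rotation via Corollary~\ref{Cor:Postish} yields $(*)$ for $(L_i, k_1, k_2)$. The other increment case and the two decrement cases ($\Phi = p_1, l_i$ or $l_i, p_2$, and $\Psi = l_j$) are entirely analogous: rotate to expose the block matching $\Phi$, apply Lemma~\ref{Lm:G}, rotate back. For the zero-test cases, e.g.\ {\sc jz}$(L_i,1,L_j)$ with $\Phi = e_1, l_j$ and $\Psi = e_1, l_i$, the point is that applicability of the instruction ($k_1 = 0$) is exactly what makes $e_1$ and $l_j$ adjacent at the boundary of the cyclic word, so again a rotation exposes $\Phi$ and Lemma~\ref{Lm:G} applies; here $\Phi$ and $\Psi$ share the prefix $e_1$, but that causes no difficulty since the lemma is stated for arbitrary sequences.

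The main obstacle, and the only thing requiring care, is bookkeeping of the cyclic rotations: I must check in each case that Corollary~\ref{Cor:Postish} — which only permutes \emph{variables}, not the compound formula $\One \wedge G$ — suffices to bring the required block of $p_r$'s, $l$'s and $e$'s to the front while keeping $\One \wedge G$ leftmost, and that after Lemma~\ref{Lm:G} a further rotation restores the canonical encoding order $e_1, p_1^{k_1}, l_i, p_2^{k_2}, e_2$. Since the antecedent-tail in $(*)$ and $(*')$ consists entirely of variables from $\Var$, Corollary~\ref{Cor:Postish} applies to it verbatim, so this is routine once the case split is set up; I would present one case in full and remark that the remaining five are obtained by the same rotate--apply--rotate pattern with the $(\Phi,\Psi)$ pairs taken from the instruction table.
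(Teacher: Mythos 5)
Your proposal is correct and follows essentially the same route as the paper: the paper also derives $(*)$ from $(*')$ by the rotate--apply--rotate pattern, writing $(*)$ as $\One \wedge G, \Delta_1, \Psi, \Delta_2 \vdash b$ and $(*')$ as $\One \wedge G, \Delta_1, \Phi, \Delta_2 \vdash b$ and chaining Corollary~\ref{Cor:Postish}, Lemma~\ref{Lm:G}, and Corollary~\ref{Cor:Postish} again. The only cosmetic difference is that the paper phrases this uniformly in $\Phi,\Psi$ rather than splitting into the six instruction cases.
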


\begin{proof}
 The proof is performed uniformly for all Minsky instructions. For any instruction $I$, the corresponding formula $A_I$ is of the form $G_{\Phi,\Psi} = \Psi \BS \Phi^{bb}$. On the other hand, $(*')$ is obtained from $(*)$ by replacing $\Psi$ with $\Phi$ in the antecedent. 
 
 For example, for the instruction {\sc inc}$(L_i,1,L_j)$ in the center of $(*)$ we have $l_i = \Psi$, which is replaced with $p_1, l_j = \Phi$ in $(*')$. This exactly corresponds to the computation step: the number of $p_1$'s (that is, the value of $c_1$) gets increased by 1, and the state is changed to $l_j$. 
 For {\sc jz}, the replacement happens at the edge of the antecedent, involving $e_1$ or $e_2$.
 
 Thus, $(*)$ is of the form $\One \wedge G, \Delta_1, \Psi, \Delta_2 \vdash b$ and $(*')$ is $\One \wedge G, \Delta_1, \Phi, \Delta_2 \vdash b$. Now we derive $(*)$ from $(*')$ in the following way:
 $$
 \infer
 [\text{\small Corollary~\ref{Cor:Postish}}]
 {\One \wedge G, \Delta_1, \Psi, \Delta_2 \vdash b}
 {\infer
 [\text{\small Lemma~\ref{Lm:G}}]
 {\One \wedge G, \Delta_2, \Delta_1, \Psi \vdash b}
 {\infer[\text{\small Corollary~\ref{Cor:Postish}}]
 {\One \wedge G, \Phi, \Delta_2, \Delta_1 \vdash b}
 {\One \wedge G, \Delta_1, \Phi, \Delta_2 \vdash b}}}
 $$
\end{proof}

Now we are ready to prove Theorem~\ref{Th:encforw}.

\begin{proof}[Proof of Theorem~\ref{Th:encforw}]
 Using Corollary~\ref{Cor:step} and induction on the number of steps in Minsky computation from $(L_i, k_1, k_2)$ to $(L_0,0,0)$, we derive $(*)$ from 
 $$
 \One \wedge G, e_1, l_0, e_2 \vdash b
 \eqno{(*_0)}
 $$
 This sequent $(*_0)$ is derived as follows:
 $$
 \infer[L\varepsilon\text{ \small 3 times}]
 {\One \wedge G, e_1, l_0, e_2 \vdash b}
 {\infer
 [\wedge L\text{ \small several times}]
 {e_1, l_0, e_2, \One \wedge G \vdash b}
 {\infer[\BS L]{e_1, l_0, e_2, e_2 \BS (l_0 \BS (e_1 \BS b)) \vdash b}
 {e_2 \vdash e_2 & 
 \infer[\BS L]{e_1, l_0, l_0 \BS (e_1 \BS b) \vdash b}
 {l_0 \vdash l_0 &
 \infer[\BS L]{e_1, e_1 \BS b \vdash b}
 {e_1 \vdash e_1 & b \vdash b}}}}}
 $$
\end{proof}

The backwards direction, Theorem~\ref{Th:encback}, is proved by constructing a specific L$\varepsilon$-model. 
Let $\Sigma = \Var$ and fefine $B_{\mathfrak{M}}$ as the set of ``terminating words'' for $\mathfrak{M}$, defined as follows:
$$
B_{\mathfrak{M}} = 
\{ e_1 \underbrace{p_1 \ldots p_1}_{\text{$k_1$ times}} l_i
\underbrace{p_2 \ldots p_2}_{\text{$k_2$ times}} e_2 \mid
\mbox{ $\mathfrak{M}$ can reach $(L_0,0,0)$ from $(L_i,k_1,k_2)$ }\}. 
$$
Now  define the interpreting function $w$ on variables as follows:
\begin{align*}
 & w(q) = \{ q \} \qquad\mbox{for $q \in \Var$;}\\
 & w(b) = \{ \Xi\Upsilon \mid \mbox{ $\Xi$ and $\Upsilon$ are words over
 $\Sigma$ such that $\Upsilon\Xi \in B_{\mathfrak{M}}$ } \}.
\end{align*}

\begin{lemma}\label{Lm:Geps}
 $w(\One \wedge G) = \{ \varepsilon \}$.
\end{lemma}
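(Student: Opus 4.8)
The plan is to compute $w(\One \wedge G)$ by first showing $w(\One) = \{\varepsilon\}$ and then showing $\varepsilon \in w(G)$, so that $w(\One \wedge G) = w(\One) \cap w(G) = \{\varepsilon\} \cap w(G) = \{\varepsilon\}$. The first part is forced: in any L$\varepsilon$-model $\One$ must be interpreted as $\{\varepsilon\}$, as recalled in the Introduction and at the start of this section. So the real content is that $\varepsilon$ belongs to $w(G)$, i.e. $\varepsilon \in w(H)$ for every conjunct $H$ of $G$. I would handle the three kinds of conjuncts in turn.

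For each conjunct I would unfold the definition of $w$ on the relevant divisions. Recall $w(A \BS B) = \{ u \mid \forall v \in w(A)\; vu \in w(B)\}$, so $\varepsilon \in w(A \BS B)$ iff $w(A) \subseteq w(B)$. Hence it suffices to check the following inclusions: (i) for the first conjunct $(e_1,l_0,e_2)\BS b$, that $w(e_1)\cdot w(l_0)\cdot w(e_2) = \{e_1 l_0 e_2\} \subseteq w(b)$; by definition of $w(b)$ this holds because, taking $\Upsilon\Xi = e_1 l_0 e_2$ with $\Xi = e_1 l_0 e_2$ and $\Upsilon = \varepsilon$ (the final configuration $(L_0,0,0)$ is trivially reachable from itself), we get $e_1 l_0 e_2 \in B_{\mathfrak M}\subseteq w(b)$. (ii) For each instruction conjunct $A_I = G_{\Phi,\Psi} = \Psi \BS \Phi^{bb}$, that $w(\Psi) \subseteq w(\Phi^{bb})$; here $w(\Phi^{bb}) = w((\Phi\BS b)\BS b)$, and one checks $\varepsilon \in w(A_I)$ iff $w(\Psi)\subseteq w(\Phi\BS b)\BS b$, which in turn reduces (by two more unfoldings) to: for every word $s$, if $s\,w(\Phi) \subseteq w(b)$ then $s\,w(\Psi)\subseteq w(b)$ — but $w(\Phi)$ and $w(\Psi)$ are singletons $\{\varphi\}$, $\{\psi\}$ of the corresponding variable sequences, so the claim is that $s\varphi \in w(b) \Rightarrow s\psi \in w(b)$, which is exactly the statement that a configuration word with $\psi$ reaching the final one can be continued from the configuration word with $\varphi$ — precisely the effect of the Minsky step encoded by $I$ (this is the content mirrored in Corollary~\ref{Cor:step}). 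One verifies this case by case over the six instruction shapes, using the way $w(b)$ is closed under the relevant rewriting. (iii) For the conjuncts $q \BS q^{bb}$ with $q \in \Var$: $\varepsilon \in w(q\BS q^{bb})$ iff $w(q) \subseteq w(q^{bb}) = w((q\BS b)\BS b)$, and $w(q)$ is a singleton $\{q\}$, so this is the generally valid fact $A \vdash {}^? $ — more precisely $w(q) \subseteq w((q\BS b)\BS b)$ holds in every L$\varepsilon$-model because $q u' \in w(q\BS b)$ means $q u' \cdot q$-stuff... — concretely, $q \in w((q\BS b)\BS b)$ iff for all $t$ with $t \in w(q\BS b)$ we have $qt \in w(b)$; and $t \in w(q\BS b)$ means $qt \in w(b)$, so the implication is trivial. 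Hence $\varepsilon\in w(q\BS q^{bb})$ always.

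Collecting (i)–(iii), $\varepsilon \in w(H)$ for every conjunct, hence $\varepsilon \in w(G)$, and therefore $w(\One\wedge G) = \{\varepsilon\}\cap w(G) = \{\varepsilon\}$. The main obstacle is verifying (ii) uniformly across the six instruction types, i.e. making sure that the definition of $B_{\mathfrak M}$ (and hence $w(b)$) is genuinely closed in the right direction under each of {\sc inc}, {\sc dec}, {\sc jz} — in particular that the zero-test encodings using $e_1$, $e_2$ at the edges behave correctly, since there the substring being rewritten touches the word boundary and one must be careful that the only words in $w(b)$ of the form $e_1 \cdots e_2$ are genuine configuration words. Everything else is routine unfolding of the division operations on singleton languages.
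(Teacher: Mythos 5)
Your overall decomposition is the same as the paper's: $w(\One)=\{\varepsilon\}$ is forced, so the lemma reduces to $\varepsilon\in w(H)$ for each conjunct $H$ of $G$, which via $\varepsilon\in w(A\BS B)\iff w(A)\subseteq w(B)$ becomes a check of three kinds of inclusions. Case (i) is fine. But in cases (ii) and (iii) you have unfolded the left division with the arguments on the wrong sides, and this error makes the crux of the argument disappear. Since $A\BS B=\{u\mid \forall v\in A\; vu\in B\}$, membership $v\in w(\Phi\BS b)$ means $\Phi v\in w(b)$ (the word $\Phi$ goes on the \emph{left}), so the statement to prove in (ii) is: for every $\Delta$, if $\Phi\Delta\in w(b)$ then $\Delta\Psi\in w(b)$ --- not your ``$s\varphi\in w(b)\Rightarrow s\psi\in w(b)$'', where both occurrences sit on the right. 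Likewise in (iii) the correct claim is $qt\in w(b)\Rightarrow tq\in w(b)$, not $qt\in w(b)\Rightarrow qt\in w(b)$; as you wrote it the implication is ``trivial'' only because you are proving the wrong (commutative-looking) statement.

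The positional mismatch between hypothesis ($\Phi$ on the left of $\Delta$) and conclusion ($\Psi$ on the right of $\Delta$) is exactly why $w(b)$ is defined as the set of \emph{cyclic rotations} $\Xi\Upsilon$ of words $\Upsilon\Xi\in B_{\Mf}$, and your proof never invokes this. The paper's argument for (ii) runs: $\Phi\Delta\in w(b)$ gives a split $\Delta=\Delta_1\Delta_2$ with $\Delta_2\Phi\Delta_1\in B_{\Mf}$ (here one must also argue that $\Phi$ cannot be cut by the rotation boundary, because every word of $B_{\Mf}$ starts with $e_1$ and ends with $e_2$ --- the worry you flag at the end but do not resolve); then $\Delta_2\Phi\Delta_1$ is the code of a configuration reachable-to-final, the instruction $I$ sends $\Delta_2\Psi\Delta_1$ to it, so $\Delta_2\Psi\Delta_1\in B_{\Mf}$, and rotating once more gives $\Delta\Psi=\Delta_1\Delta_2\Psi\in w(b)$. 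Case (iii) similarly needs closure of $w(b)$ under cyclic transposition. So the gap is not a detail: the steps you label as routine are false as literally stated, and the correct versions are precisely where the construction of $w(b)$ does its work.
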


\begin{proof}
 It is sufficient to show that $\varepsilon \in w(G)$, that is, $\varepsilon$ belongs to interpretation of all formulae in the big conjunction $G$.
 
 First, $\varepsilon \in w((e_1, l_0, e_2) \BS b)$. Indeed,
 $w((e_1,l_0,e_2) \BS b) = 
 \{ e_1 l_0 e_2 \} \BS w(b)$, thus, 
 %w(e_2) \BS (w(l_0) \BS (w(e_1) \BS w(b)))$. Since $w(e_2) = \{ e_2 \}$, $w(l_0) = \{ l_0 \}$, and $w(e_1) = \{ e_1 \}$, 
 we have to show that $e_1 l_0 e_2 \varepsilon = e_1 l_0 e_2 \in w(b)$. This is indeed so by the definition of $B_{\mathfrak{M}}$, since $(L_0,0,0)$ is reachable from itself in zero steps.
 
 Second, we prove that $\varepsilon \in w(A_I)$ for each instruction $I$ of $\mathfrak{M}$. Recall that $A_I = \Psi \BS \Phi^{bb} = \Psi \BS ((\Phi \BS b) \BS b)$, and if instruction $I$ changes the configuration from $(L_i, k_1, k_2)$ to $(L_{i'}, k'_1, k'_2)$, then the code of the second configuration is obtained from the code of the first one by replacing $\Psi$ with $\Phi$. 
 In other words, the code of $(L_i, k_1, k_2)$ is $\Delta_1 \Psi \Delta_2$ and the code of $(L_{i'}, k'_1, k'_2)$ is $\Delta_1 \Phi \Delta_2$. We have to prove that $\varepsilon \in w(\Psi) \BS w((\Phi \BS b) \BS b)$.
 Since $w(\Psi) = \{ \Psi \}$ ($\Psi$ contains only letters from $\Var$), this means that $\Psi$ should belong to 
 $w((\Phi \BS b) \BS b)$. 
 
 In turn, $\Psi \in w((\Phi \BS b) \BS b)$ means 
 that for any word $\Delta \in w(\Phi \BS b)$ we have
 $\Delta \Psi \in w(b)$. The fact that $\Delta \in w(\Phi \BS b)$, since $w(\Phi) = \{ \Phi \}$, actually means that $\Phi\Delta \in w(b)$. Thus, we have to prove, for an arbitrary $\Delta$, that if $\Phi\Delta \in w(b)$, then $\Delta\Psi \in w(b)$.
 
 If $\Phi\Delta \in w(b)$, then we have $\Delta = \Delta_1\Delta_2$, and $\Delta_2 \Phi \Delta_1 \in B_{\mathfrak{M}}$. Here $\Phi$ cannot be split between $\Xi$ and $\Upsilon$, because any word in $B_{\mathfrak{M}}$ should begin with $e_1$ and end on $e_2$. This means that $\Delta_2 \Phi \Delta_1$ is a code of some configuration $(L_{i'}, k'_1, k'_2)$, from which $\mathfrak{M}$ can reach the final configuration. As noticed above, this means that $\Delta_2 \Psi \Delta_1$ encodes a configuration $(L_i, k_1, k_2)$, which transforms into $(L_{i'}, k'_1, k'_2)$ by applying instruction~$I$. Therefore, from $(L_i, k_1, k_2)$ our Minsky machine can also reach the final state, hence $\Delta_2 \Psi \Delta_1 \in B_{\mathfrak{M}}$. This yields $\Delta\Psi = \Delta_1 \Delta_2 \Psi \in w(b)$, which is our goal.
 
 Third, consider $q \BS q^{bb}$, where $q \in \Var$. We have to show that $\varepsilon \in w(q) \BS w(q^{bb})$, that is, $q \in w(q^{bb})$. The latter means that for any $\Delta \in w(q \BS b)$ the word $\Delta q$ should belong to $w(b)$. 
 This is indeed so: if $\Delta \in w(q \BS b)$, then $q \Delta \in w(b)$, and since $w(b)$ is closed under cyclic transpositions, also $\Delta q \in w(b)$.
\end{proof}

Now we are ready to prove Theorem~\ref{Th:encback}.

\begin{proof}[Proof of Theorem~\ref{Th:encback}]
 If $(*)$ is true in all L$\varepsilon$-models, it is true in the specific model defined above. By Lemma~\ref{Lm:Geps}, $w(\One \wedge G) = \{ \varepsilon \}$;
 $w(q) = \{ q \}$ for any $q \in \Var$. 
Thus, we have
 $$
 e_1 \underbrace{p_1 \ldots p_1}_{\text{$k_1$ times}} l_i 
 \underbrace{p_2 \ldots p_2}_{\text{$k_2$ times}} e_2 \in w(b),
 $$
 and therefore
 $$
 e_1 \underbrace{p_1 \ldots p_1}_{\text{$k_1$ times}} l_i 
 \underbrace{p_2 \ldots p_2}_{\text{$k_2$ times}} e_2 \in B_{\mathfrak{M}}.
 $$
 (No cyclic transposition is possible, since $e_1$ and $e_2$ should start and end the word.)
 
 By definition of $B_{\mathfrak{M}}$, this means that $\mathfrak{M}$ can reach the final state $(L_0,0,0)$, starting from $(L_i,k_1,k_2)$.
\end{proof}

\subsection{Models on Regular Languages with the Unit Constant}

Let $\Th(\mbox{L$\varepsilon$-models}; \BS, \wedge, \One)$ denote the set of all sequents in the language of $\BS$, $\wedge$, $\One$ which are true in all L$\varepsilon$-models, that is, the {\em complete theory} of this class of models.

As noticed above, the question of axiomatizing this theory is 
quite involved. We know that this theory includes $\LLe(\BS,\wedge,\One)$, introduced in the previous section, but it is  probably much more complicated. For example, as shown in~\cite{KuznJANCL}, Soboci\'{n}ski's 3-valued logic $\mathbf{RM}_3$ can be embedded into $\Th(\mbox{L$\varepsilon$-models}; \BS, \wedge, \One)$. 

It follows from Theorem~\ref{Th:undec} that $\Th(\mbox{L$\varepsilon$-models}; \BS, \wedge, \One)$ is undecidable. More precisely, it is $\Sigma_1^0$-hard (hard w.r.t.\ the class of recursively enumerable sets). The upper complexity bound, however, is not known: this theory could possibly be even not recursively enumerable.
Having the algorithmic complexity question for $\Th(\mbox{L$\varepsilon$-models}; \BS, \wedge, \One)$ open, we can still obtain an interesting corollary of our complexity estimations.

Recall the standard notion of regular expression. Regular expressions are constructed from constants $0$ and $1$ using two binary operations, $\cdot$ and $+$, and one unary operation, ${}^*$. The language $\mathscr{L}(R)$ described by a given regular expression $R$ is defined recursively:
\begin{align*}
 & \mathscr{L}(0) = \varnothing; \\
 & \mathscr{L}(1) = \{ \varepsilon \}; \\
 & \mathscr{L}(A \cdot B) = \mathscr{L}(A) \cdot \mathscr{L}(B);\\
 & \mathscr{L}(A + B) = \mathscr{L}(A) \cup \mathscr{L}(B); \\
 & \mathscr{L}(A^*) = \bigl(\mathscr{L}(A)\bigr)^* = 
 \{ u_1 \ldots u_n \mid n \ge 0, u_i \in \mathscr{L}(A) \}.
\end{align*}
Languages described by regular expressions are called regular languages.

By {\sc Lreg}$\varepsilon$-models let us denote a subclass of L$\varepsilon$-models in which every variable as interpreted as a {\em regular} language, that is, a set of words described by a regular expression. It is well-known that the class of regular languages is closed under intersection (see, for example, \cite[Theorem~2.8]{AhoUllman}). Moreover, it is also closed under division:
\begin{proposition}
 If $A$ and $B$ are regular languages, then so are $A \BS B$ and $B \SL A$.
\end{proposition}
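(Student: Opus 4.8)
The plan is to reduce the apparently infinitary universal quantifier in the definitions of $\BS$ and $\SL$ to a finite one, exploiting the finiteness of a deterministic automaton for $B$ (equivalently, the Myhill--Nerode property that a regular language has only finitely many left and right quotients). First I would fix a deterministic finite automaton $M = (Q, \Sigma, \delta, q_0, F)$ with $L(M) = B$, write $\delta^*$ for the usual extension of $\delta$ to words, and for each state $q \in Q$ set $M_q = (Q, \Sigma, \delta, q, F)$, so that every $L(M_q)$ is regular.

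For the right division $B \SL A$ I would observe that $u \in B \SL A$ says $uv \in B$ for all $v \in A$, which after feeding $u$ into $M$ amounts to $\delta^*(\delta^*(q_0, u), v) \in F$ for all $v \in A$; hence whether $u \in B \SL A$ depends only on the state $\delta^*(q_0, u)$. Collecting the ``good'' states $G = \{ q \in Q \mid A \subseteq L(M_q) \}$, one gets $B \SL A = \{ u \mid \delta^*(q_0, u) \in G \}$, which is recognized by $(Q, \Sigma, \delta, q_0, G)$ and is therefore regular.

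For the left division $A \BS B$ I would argue symmetrically, but from the other end: $u \in A \BS B$ says $\delta^*(q_0, vu) = \delta^*(\delta^*(q_0, v), u) \in F$ for all $v \in A$, so it is enough to control $\delta^*(q, u)$ for $q$ ranging over the finite set $S = \{ \delta^*(q_0, v) \mid v \in A \} \subseteq Q$ of states reachable by reading words of $A$. Hence $A \BS B = \bigcap_{q \in S} L(M_q)$, a finite intersection of regular languages, which is regular by closure of the regular languages under intersection (already recalled above).

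The only delicate point — and the ``obstacle'', such as it is — is the collapse of the quantification over the possibly infinite set $A$ to a finite condition, which is precisely where finiteness of $Q$ enters. I would also remark in passing that regularity of $A$ is never used: $A \BS B$ and $B \SL A$ are regular whenever $B$ is, for an arbitrary language $A$.
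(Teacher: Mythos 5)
Your proof is correct, but it takes a genuinely different route from the one in the paper. The paper reduces the universal division to the more familiar \emph{existential} quotient $A \cBS B = \{ u \mid (\exists v \in A)\, vu \in B \}$ via the De Morgan identity $A \BS B = \overline{A \cBS \overline{B}}$, and then simply cites textbook closure of regular languages under existential quotients and complementation. You instead work directly with a DFA for $B$ and collapse the universal quantification over the (possibly infinite) set $A$ to a finite condition on states: membership of $u$ in $B \SL A$ depends only on $\delta^*(q_0,u)$, so it suffices to change the accepting set to $G = \{ q \mid A \subseteq L(M_q) \}$, while $A \BS B$ becomes the finite intersection $\bigcap_{q \in S} L(M_q)$ over the states $S$ reachable by words of $A$ (with the empty intersection $\Sigma^*$ correctly handling $A = \varnothing$). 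Your construction is self-contained, whereas the paper's is shorter but leans on two cited closure facts; both in fact show that regularity of $A$ is never needed, a point you make explicit and the paper leaves implicit.
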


\begin{proof}
 A more well-known fact (see, for example,~\cite[Exercise 2.3.17a]{AhoUllman}) is that the class of regular languages is closed under the following modified division operation with the existential quantifier instead of the universal one: $A \cBS B = 
\{ u \in \Sigma^* \mid (\exists v \in A) \, vu \in B\}$. Our ``normal'' division $\BS$ can be reduced to $\cBS$ by the complement operation:
$A \BS B = \overline{A \cBS \overline B}$, where
$\overline{X} = \Sigma^* - X$. Since the class of regular languages is closed under $\cBS$ and complement (again, see~\cite[Theorem~2.8]{AhoUllman}), it is also closed under $\BS$.
The $\SL$ case is symmetric.
\end{proof}

% (see~[FIXME cite textbook]).
Thus, in {\sc Lreg}$\varepsilon$-models interpretations of all formulae are regular languages.

In the language without the unit constant, namely, $\BS$, $\SL$, $\wedge$, the theory of {\sc Lreg}$\varepsilon$-models coincides with the theory of all L$\varepsilon$-models:

\begin{proposition}
$
\Th(\mbox{{\sc Lreg}$\varepsilon$-models}; \BS, \SL, \wedge) =
\Th(\mbox{L$\varepsilon$-models}; \BS, \SL, \wedge).
$
\end{proposition}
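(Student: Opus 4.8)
The plan is to establish the two inclusions separately. The inclusion $\Th(\mbox{L$\varepsilon$-models}; \BS, \SL, \wedge) \subseteq \Th(\mbox{{\sc Lreg}$\varepsilon$-models}; \BS, \SL, \wedge)$ is trivial, since {\sc Lreg}$\varepsilon$-models form a subclass of all L$\varepsilon$-models: any sequent true in every L$\varepsilon$-model is, a fortiori, true in every {\sc Lreg}$\varepsilon$-model. So the real work is the reverse inclusion: a sequent in the language of $\BS, \SL, \wedge$ that is true in all {\sc Lreg}$\varepsilon$-models must be true in all L$\varepsilon$-models.

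For this I would go through the proof theory rather than argue semantically on the model side. By Buszkowski's completeness result (the $\MALCs(\BS,\SL,\wedge)$ case of Theorem~\ref{Th:compl}), the set $\Th(\mbox{L$\varepsilon$-models}; \BS, \SL, \wedge)$ is exactly the set of sequents derivable in $\MALCs(\BS,\SL,\wedge)$. Hence it suffices to show that every sequent \emph{not} derivable in $\MALCs(\BS,\SL,\wedge)$ can be refuted in some {\sc Lreg}$\varepsilon$-model. The natural candidate is the canonical (Buszkowski) model $w_0$ used in the proof of Theorem~\ref{Th:compl}: its alphabet is the set of all $(\BS,\SL,\wedge)$-formulae, and $w_0(A) = \{\Gamma \mid \Gamma \vdash A \text{ derivable}\}$. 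This model already refutes every underivable sequent, so the only thing left to check is that it is (or can be replaced by one that is) a {\sc Lreg}$\varepsilon$-model — i.e.\ that each $w_0(q)$ is a regular language over the formula alphabet. But that alphabet is infinite, whereas a regular language lives over a finite alphabet; so a direct appeal to $w_0$ does not literally work, and one must cut down to the finitely many formulae relevant to a fixed underivable sequent.

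The fix, which I expect to be the main technical point, is a \emph{finite-model / bounded-alphabet} refinement of Buszkowski's construction: given one fixed underivable sequent $\Pi \vdash B$, restrict attention to (a suitable finite, subformula-closed set of) formulae occurring in any cut-free derivation attempt of $\Pi \vdash B$, take that finite set as the alphabet $\Sigma$, and define $w_0(q) = \{\Gamma \in \Sigma^* \mid \Gamma \vdash q \text{ derivable}\}$ on this finite alphabet. One then argues (i) this is still an L$\varepsilon$-model for the operations appearing in $\Pi \vdash B$ — using the cut rule and the axioms exactly as in the proof of Theorem~\ref{Th:compl}, but now checking that the subformula property keeps all the relevant derivations inside the finite fragment — and (ii) it still refutes $\Pi \vdash B$, since $\Pi \in w_0(A_1)\cdots w_0(A_n)$ while $\Gamma \notin w_0(B)$ by underivability. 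The remaining ingredient is that each $w_0(q)$ is regular: since $\MALCs(\BS,\SL,\wedge)$-derivability is decidable and, over a fixed finite alphabet, the set of antecedents $\Gamma$ with $\Gamma \vdash q$ derivable is recognized by a suitable finite automaton — e.g.\ via Buszkowski's observation that this set is context-free, combined with the fact that the languages $w_0(q)$ turn out to be regular here because of the structure of the product-free fragment, or simply by noting that for the purposes of refuting one sequent one may further collapse $w_0$ to a finite-state congruence. Putting these together: $\Pi \vdash B \notin \Th(\mbox{{\sc Lreg}$\varepsilon$-models}; \BS, \SL, \wedge)$ whenever $\Pi \vdash B$ is underivable, which is the contrapositive of the desired inclusion, and the proof is complete.
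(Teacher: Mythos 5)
Your reduction of the problem is the right one: the inclusion $\Th(\mbox{L$\varepsilon$-models}; \BS, \SL, \wedge) \subseteq \Th(\mbox{{\sc Lreg}$\varepsilon$-models}; \BS, \SL, \wedge)$ is immediate, and by Buszkowski's completeness theorem the converse amounts to refuting every underivable sequent of $\MALCs(\BS,\SL,\wedge)$ in some {\sc Lreg}$\varepsilon$-model. But the step where you actually produce such a regular countermodel has a genuine gap. Your plan is to take the canonical model $w_0$, cut the alphabet down to a finite subformula-closed set, and then claim that each $w_0(q) = \{\Gamma \in \Sigma^* \mid \Gamma \vdash q\}$ is regular. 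None of the justifications you offer establishes this: decidability of derivability only makes $w_0(q)$ a recursive language, not a regular one; context-freeness (even if granted) is strictly weaker than regularity; and ``collapsing $w_0$ to a finite-state congruence'' while preserving the identities $w_0(A \BS B) = w_0(A) \BS w_0(B)$, etc., is precisely the hard technical content you would need to supply --- it is not a routine afterthought, and there is no reason to expect the syntactic congruence of $w_0(q)$ to have finite index in general. So as written the argument asserts the key fact rather than proving it.

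The paper sidesteps this entirely by citing a different, stronger completeness theorem of Buszkowski (1996): $\MALCs(\BS,\SL,\wedge)$ is complete with respect to L$\varepsilon$-models in which every variable is interpreted by a \emph{cofinite} language. In such a model every formula is interpreted by a finite or cofinite language, and finite and cofinite languages are trivially regular, so the refuting model is automatically an {\sc Lreg}$\varepsilon$-model and no separate regularity argument is needed. If you want to complete your proof along your own lines, you would either have to prove regularity of the restricted canonical interpretations (which I doubt holds in general) or import a completeness result, such as the cofinite-model one, that builds regularity into the countermodel from the start.
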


\begin{proof}
 On the one hand, the calculus $\MALCs(\BS,\SL,\wedge)$ is sound w.r.t.\ all L$\varepsilon$-models. On the other hand, as shown by Buszkowski~\cite{Buszkowski1996}, it is complete w.r.t.\ a class of models which is even narrower than the class of {\sc Lreg}$\varepsilon$-models. Namely, $\MALCs(\BS,\SL,\wedge)$ is complete w.r.t.\ the class of L$\varepsilon$-models in which variables are interpreted by cofinite languages. (A cofinite language is a language which includes all words over a given alphabet, except for a finite set.) In this case, formulae are interpreted by cofinite or finite languages, and any finite or cofinite language is regular. % An alternative construction for proving completeness of the product-free Lambek calculus w.r.t.\ cofinite L-models by Sorokin~\cite{Sorokin} can be also extended to the system with $\wedge$.) 
Therefore, both $\Th(\mbox{{\sc Lreg}$\varepsilon$-models}; \BS, \SL, \wedge)$ and $\Th(\mbox{L$\varepsilon$-models}; \BS, \SL, \wedge)$ are axiomatized by the same calculus $\MALCs(\BS,\SL,\wedge)$.
\end{proof}

The unit changes things dramatically. With the unit, there is no completeness result, like Theorem~\ref{Th:compl}, but also no equivalence between theories of all L$\varepsilon$-models and {\sc Lreg}$\varepsilon$-models.

\begin{theorem}\label{Th:regdiff}
 $\Th(\mbox{{\sc Lreg}$\varepsilon$-models}; \BS, \wedge, \One) \ne
\Th(\mbox{L$\varepsilon$-models}; \BS, \wedge, \One).$
\end{theorem}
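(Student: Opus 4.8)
The plan is to exploit the gap between decidability of the {\sc Lreg}$\varepsilon$-model theory and the undecidability of the full L$\varepsilon$-model theory established in Theorem~\ref{Th:undec}. The key observation is that $\Th(\mbox{{\sc Lreg}$\varepsilon$-models}; \BS, \wedge, \One)$ \emph{is} decidable, or at least recursively enumerable in a way that makes it strictly contained in the full theory. So first I would argue that the {\sc Lreg}$\varepsilon$-model theory is co-r.e.\ (i.e.\ $\Pi_1^0$): to refute a sequent $\Pi \vdash C$ in a {\sc Lreg}$\varepsilon$-model, one can search over all assignments of regular languages (given by regular expressions, a countable effectively enumerable set) to the finitely many variables occurring in the sequent; for each such assignment the interpretations of all subformulae are computable regular languages (using closure under $\cap$, $\BS$, and noting $\One$ is interpreted as $\{\varepsilon\}$), and the inclusion $w(\Pi) \subseteq w(C)$ between regular languages is decidable. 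Hence non-membership in $\Th(\mbox{{\sc Lreg}$\varepsilon$-models})$ is r.e., so the theory itself is co-r.e.

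Next, I would note that $\Th(\mbox{L$\varepsilon$-models}; \BS, \wedge, \One)$ is $\Sigma_1^0$-hard by Theorem~\ref{Th:undec} (every {\sc Lreg}$\varepsilon$-model is an L$\varepsilon$-model, so {\sc Lreg}$\varepsilon$-validity is implied by L$\varepsilon$-validity; the two theories satisfy $\Th(\mbox{L$\varepsilon$-models}) \subseteq \Th(\mbox{{\sc Lreg}$\varepsilon$-models})$). If the two theories were equal, then $\Th(\mbox{L$\varepsilon$-models}; \BS, \wedge, \One)$ would be co-r.e.; but it is also $\Sigma_1^0$-hard, and a $\Sigma_1^0$-hard set that is $\Pi_1^0$ would make every r.e.\ set co-r.e., contradicting the existence of a non-recursive r.e.\ set (e.g.\ the halting problem reduces to Minsky reachability, which reduces to this theory). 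This contradiction proves the two theories are distinct.

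Alternatively — and this is probably what the authors intend, since it yields a concrete witness — I would extract an explicit separating sequent. One uses the forward encoding: pick a Minsky machine $\mathfrak{M}$ and configuration $(L_i,k_1,k_2)$ for which reachability of $(L_0,0,0)$ fails, but arrange (this is the delicate part) for the encoding sequent $(*)$ to nonetheless be true in every {\sc Lreg}$\varepsilon$-model. By Theorem~\ref{Th:encback}, $(*)$ is \emph{not} true in all L$\varepsilon$-models (since $\mathfrak{M}$ does not reach $(L_0,0,0)$). So one needs the complementary fact: truth of $(*)$ in {\sc Lreg}$\varepsilon$-models is a strictly weaker demand. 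The cleanest way is the counting argument above, but if a named witness is wanted, one can take a \emph{fixed universal} Minsky machine and a diagonal input, or more simply observe that the set of encoding sequents $(*)$ true in {\sc Lreg}$\varepsilon$-models is decidable while the set true in all L$\varepsilon$-models is not — hence they differ on some $(*)$.

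The main obstacle is the first step: establishing that the {\sc Lreg}$\varepsilon$-model theory really is co-r.e., which requires that one can effectively enumerate \emph{all} regular-language assignments and effectively compute subformula interpretations and test inclusions. The closure results (regular languages closed under $\cap$ and under $\BS$, the latter just proved in the Proposition above) handle the subformula computation; the enumeration of assignments is routine; and regular language inclusion is classically decidable. One subtlety worth a sentence: for a \emph{fixed} sequent only finitely many variables matter, so the search space is a countable product that is effectively enumerable. Once co-r.e.\ is in hand, the conclusion is immediate from Theorem~\ref{Th:undec} as sketched. I would present the argument via the complexity contradiction, since it is short and robust, and remark that it also shows the stronger statement that $\Th(\mbox{L$\varepsilon$-models}; \BS, \wedge, \One)$ is not even co-r.e.\ would follow if one knew more, but at minimum the two theories are unequal.
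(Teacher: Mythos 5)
Your main argument is exactly the paper's proof: show that $\Th(\mbox{{\sc Lreg}$\varepsilon$-models}; \BS, \wedge, \One)$ is $\Pi_1^0$ by quantifying over regular-expression assignments (using closure of regular languages under $\wedge$ and $\BS$ and decidability of regular-language inclusion), note that $\Th(\mbox{L$\varepsilon$-models}; \BS, \wedge, \One)$ is $\Sigma_1^0$-hard by Theorem~\ref{Th:undec}, and conclude the theories differ since no set is both $\Pi_1^0$ and $\Sigma_1^0$-hard. Your ``alternative'' of exhibiting a concrete separating sequent is not carried out (the paper explicitly leaves that as an open problem), but it is not needed; the complexity-contradiction argument stands on its own and matches the paper.
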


\begin{proof}
 As follows from Theorem~\ref{Th:undec},  $\Th(\mbox{L$\varepsilon$-models}; \BS, \wedge, \One)$ is $\Sigma_1^0$-hard. On the other hand, following Buszkowski~\cite{Buszko2006RelMiCS}, we can show that $\Th(\mbox{L$\varepsilon$-models}; \BS, \wedge, \One)$ belongs to the $\Pi_1^0$ class. Indeed, a sequent belongs to this theory if and only if it is true in all {\sc Lreg}$\varepsilon$-models. A concrete sequent includes only a finite number of variables, $p_1$, \ldots, $p_n$. Thus, a model for this sequent is defined by a finite number of regular expressions $R_1$, \ldots, $R_n$, which describe the languages $w(p_1)$, \ldots, $w(p_n)$. This means that the general truth condition for this sequent can be written down as the following formula:
 \begin{multline*}
 \forall R_1 \, \ldots \, \forall R_n \bigl(
 \mbox{the sequent is true under interpretation} \\ \mbox{where $w(p_i)$ is the language of $R_i$} \bigr).
 \end{multline*}
 Quantifiers $\forall R_1$, \ldots, $\forall R_n$ can be encoded as quantifiers over natural numbers representing the regular expressions.
 %(regular expressions can be represented by their G\"{o}del numbers). 
 The quantifier-free part of the formula (truth condition under a concrete $w$) is decidable, because all necessary operations on regular expressions are computable. % and therefore belongs to $\Pi_1^0$. 
 Thus, we get a $\Pi_1^0$ representation of the set of sequents which are true in all {\sc Lreg}$\varepsilon$-models.
 
 It is well known that a set cannot belong to $\Pi_1^0$ and be $\Sigma_1^0$-hard at the same time. (Otherwise, for any set in $\Sigma_1^0$ there would be a computable reduction to a set in $\Pi_1^0$, which would yield $\Sigma_1^0 \subseteq \Pi_1^0$, which is not the case.) Therefore,
 $$\Th(\mbox{{\sc Lreg}$\varepsilon$-models}; \BS, \wedge, \One) \ne
\Th(\mbox{L$\varepsilon$-models}; \BS, \wedge, \One).$$
\end{proof}

Notice that our proof of Theorem~\ref{Th:undec} does not apply to 
$\Th(\mbox{{\sc Lreg}$\varepsilon$-models}; \linebreak \BS, \wedge, \One)$, because the language $w(b)$ there is non-regular (in fact, it is undecidable). 

Since the class of {\sc Lreg}$\varepsilon$-models is narrower than the class of all L$\varepsilon$-models, we have (by Galois connection) an inverted inclusion of theories:
$$\Th(\mbox{{\sc Lreg}$\varepsilon$-models}; \BS, \wedge, \One) \supset
\Th(\mbox{L$\varepsilon$-models}; \BS, \wedge, \One).
$$
By our Theorem~\ref{Th:regdiff}, this inclusion is strict.
Thus, the other inclusion should fail:
$$\Th(\mbox{{\sc Lreg}$\varepsilon$-models}; \BS, \wedge, \One) \not\subset
\Th(\mbox{L$\varepsilon$-models}; \BS, \wedge, \One).
$$
In other words, there exists a sequent which is true in all {\sc Lreg}$\varepsilon$-models, but not in all L$\varepsilon$-models. Our proof, however, is non-constructive, and we do not present a concrete example of such sequent. %(Notice that Buszkowski's~\cite{Buszko2006RelMiCS} proof of a similar result, for systems with Kleene star, is also complexity-based and non-constructive.) 
Constructing such a concrete example is left for further research.

Notice that if we apply the reasoning establishing the upper $\Pi_1^0$ bound of $\Th(\mbox{{\sc Lreg}$\varepsilon$-models}; \BS, \wedge, \One)$ to $\Th(\mbox{L$\varepsilon$-models}; \BS, \wedge, \One)$, we shall have to quantify over {\em arbitrary} formal languages $w(p_1)$, \ldots, $w(p_n)$. This results in  hyperarithmetical quantifiers, and yields only a very high, $\Pi_1^1$ complexity upper bound  for $\Th(\mbox{L$\varepsilon$-models}; \BS, \wedge, \One)$.

%FIXME TBC ... non-constructive proof .... complexity ... ref. Buszkowski for $\Pi_1^0$-...

%FIXME: look at Andreka etc.... maybe they have examples ... (``Kleene lattices'')
%\hrule

%NB!!! For the principles of $\wedge$, see ``The equational theory of Kleene lattices'' (Andreka, Mikulas, Nemeti) + maybe some undecidability results also come from there... (cf. ``Logic and Inform. Flow'' book -- smth conjectured there)

\section{Concluding Remarks}

In this article, we have investigated language interpretations of natural extensions of the Lambek calculus: with additive operations ($\vee$ and $\wedge$) and with additive conjunction ($\wedge$) and the unit constant ($\One$). 

For extensions with additive connectives (Section~\ref{S:distr}), we have shown that conjunction and disjunction show significantly different behaviour. It is known that adding both conjunction and disjunction leads to incompleteness due to the distributivity law $\mathcal{D}$. This law  is true in all language models, but not derivable in the multiplicative-additive Lambek calculus ($\MALC$). Adding only conjunction, however, still provides completeness. Any sequent in the language of $\BS$, $\SL$, $\wedge$ (but not $\vee$) that is derivable with the help of $\mathcal{D}$, is also derivable without it. For disjunction the situation is opposite: there exists a sequent in the language of $\BS$, $\SL$, $\vee$, which is derivable using $\mathcal{D}$, but not derivable without it.

Moreover, this difference between $\wedge$ and $\vee$ keeps valid for systems with permutation and/or weakening structural rules, that is, intuitionistic linear ($\ILL$), and affine ($\IAL$) logics and affine $\MALC$. 

For the extension of the Lambek calculus with the unit, $\One$, it is well-known that its standard axiomatization in the style of linear logic does not give an L$\varepsilon$-complete system. In Section~\ref{S:unit}, we present a system in the language $\BS, \wedge, \One$, where rules for $\One$ reflect natural algebraic properties of the empty word in the algebra of formal languages.
This system is denoted by $\LLe(\BS,\wedge,\One)$. We do not claim L$\varepsilon$-completeness of $\LLe(\BS,\wedge,\One)$. Instead, we consider the whole range of logics between $\LLe(\BS,\wedge,\One)$ and the L$\varepsilon$-complete system denoted by $\Th(\mbox{L$\varepsilon$-models}; \BS, \wedge, \One)$. For any logic within this range, we show that it is undecidable; more precisely, $\Sigma_1^0$-complete. As a corollary, we also show that, in the language of $\BS, \wedge, \One$, the complete theory of all L$\varepsilon$-models is different from that of {\sc Lreg}$\varepsilon$-models, where formulae are interpreted by regular languages.

A preliminary version of this article was presented at WoLLIC 2019 and published in its lecture notes~\cite{KanKuzSceWoLLICLMod}. 
Let us briefly list the results which are new in this article, compared to the WoLLIC paper.
%
%Compared to the WoLLIC paper, this article includes more detailed proofs of the theorems and features several new results:
\begin{itemize}
 \item In the language  without additive conjunction, we show incompleteness not only for $\MALC$, but also for its extensions: $\MALCs$, $\AMALC$, $\ILL$, and $\IAL$. %In order to make our construction work for $\IAL$, we had to modify  the counter-example in Theorem~\ref{Th:disj}.
 \item We prove that $\MALC(\BS,\SL,\wedge)$ is a conservative fragment of $\MALC$ extended with the distributity law $\mathcal{D}$. Moreover, we prove similar results for $\MALCs$, $\AMALC$, $\ILL$, and $\IAL$.
 %
 %explicitly present conservativity results for systems without $\mathcal{D}$ in the language without additive disjunction, as fragments of the corresponding sequents with $\mathcal{D}$, formulated in the full language.
 \item %In Section~\ref{S:unit}, we present an interesting corollary of the main undecidability result: 
 We prove that, in the language including $\One$, the theory of all L$\varepsilon$-models is different from the theory of L{\sc Reg}$\varepsilon$-models, in which formulae are interpreted by regular languages. In the language of $\BS, \SL, \wedge$ (without $\One$), the corresponding theories  coincide due to a completeness result by Buszkowski~\cite{Buszkowski1996}.
\end{itemize}

While in Section~\ref{S:distr} we have presented a quite completed study, Section~\ref{S:unit} leaves many questions open for further investigations. Among these, we would like to emphasize the following ones.
\begin{itemize}
 \item The question of axiomatization, or even recursive enumerability for complete theories $\Th(\mbox{L$\varepsilon$-models}; \BS, \wedge, \One)$ and $\Th(\mbox{{\sc Lreg}$\varepsilon$-models}; \BS, \wedge, \One)$ is still open, and potentially very hard. Notice that these theories are different (Theorem~\ref{Th:regdiff}) and that for $\Th(\mbox{{\sc Lreg}$\varepsilon$-models}; \BS, \wedge, \One)$ enumerability will immediately mean decidability.
 
 \item A possibly easier question would be to construct a concrete formula distinguishing $\Th(\mbox{L$\varepsilon$-models}; \BS, \wedge, \One)$ and $\Th(\mbox{{\sc Lreg}$\varepsilon$-models}; \BS, \wedge, \One)$. That is,  we are looking for an explicit example for Theorem~\ref{Th:regdiff}.
 
 \item Without the unit, we know that $$\MALCs(\BS,\SL,\wedge)= \Th(\mbox{L$\varepsilon$-models}; \BS, \SL, \wedge) = \Th(\mbox{{\sc Lreg}$\varepsilon$-models}; \BS, \SL, \wedge).$$ By the completeness theorem of Pentus~\cite{PentusAPAL,PentusFmonov}, the first equality is also true for the language of $\BS,\SL,\cdot$ (with product instead of additive conjunction). There are two open questions. First, whether Pentus' theorem is true for the language with both conjunctions ($\BS,\SL,\cdot,\wedge$). Second, whether Pentus' theorem is true for  {\sc Lreg}$\varepsilon$-models. Both questions are questions of making Pentus' result stronger. Recalling that Pentus' proofs are quite sophisticated, these questions are also probably very hard.
\end{itemize}

\bibliography{subexponential.bib}

\providecommand{\noopsort}[1]{}
\begin{thebibliography}{10}
\expandafter\ifx\csname url\endcsname\relax
  \def\url#1{\texttt{#1}}\fi
\expandafter\ifx\csname urlprefix\endcsname\relax\def\urlprefix{URL }\fi
\expandafter\ifx\csname href\endcsname\relax
  \def\href#1#2{#2} \def\path#1{#1}\fi

\bibitem{Lambek58}
J.~Lambek, The mathematics of sentence structure, Amer. Math. Monthly 65 (1958)
  154--170.

\bibitem{MootRetore}
R.~Moot, C.~Retor{\'e}, {The logic of categorial grammars: a deductive account
  of natural language syntax and semantics}, Vol. 6850 of LNCS, Springer, 2012.

\bibitem{Lambek61}
J.~Lambek, {On the calculus of syntactic types}, in: {Structure of Language and
  Its Mathematical Aspects}, Vol.~12 of Proc. Symposia Appl. Math., AMS, 1961,
  pp. 166--178.

\bibitem{PentusAPAL}
M.~Pentus, Models for the {L}ambek calculus, Annals of Pure and Applied Logic
  75~(1--2) (1995) 179--213.

\bibitem{PentusFmonov}
M.~Pentus, {Free monoid completeness of the Lambek calculus allowing empty
  premises}, in: Proc. Logic Colloquium '96, Vol.~12 of Lect. Notes Logic,
  Springer, 1998, pp. 171--209.

\bibitem{Buszko1982}
W.~Buszkowski, Compatibility of a categorial grammar with an associated
  category system, Zeitschr. Math. Logik Grundl. Math. (Math. Logic Q.) 28
  (1982) 229--238.

\bibitem{KanazawaJoLLI}
M.~Kanazawa, {The Lambek calculus enriched with additional connectives}, J.
  Logic Lang. Inform. 1~(2) (1992) 141--171.

\bibitem{Abrusci}
V.~M. Abrusci, {A comparison between Lambek syntactic calculus and
  intuitionistic linear logic}, Zeitschr. math. Logik Grundl. Math. (Math.
  Logic Q.) 36 (1990) 11--15.

\bibitem{Girard}
J.-Y. Girard, Linear logic, Theor. Comput. Sci. 50~(1) (1987) 1--102.

\bibitem{Lambek69}
J.~Lambek, {Deductive systems and categories II. Standard constructions and
  closed categories}, in: Category Theory, Homology Theory and Their
  Applications I, Vol.~86 of Lect. Notes Math., Springer, 1969, pp. 76--122.

\bibitem{OnoKomori}
H.~Ono, Y.~Komori, Logics without contraction rule, J. Symb. Log. 50~(1) (1985)
  169--201.

\bibitem{Kozak}
M.~Kozak, Distributive full {L}ambek calculus has the finite model property,
  Studia Logica 91 (2009) 201--216.

\bibitem{KanKuzSceWoLLICLMod}
M.~Kanovich, S.~Kuznetsov, A.~Scedrov, {L}-models and {R}-models for {L}ambek
  calculus enriched with additives and the multiplicative unit, in: WoLLIC
  2019: Logic, Language, Information, and Computation, Vol. 11541 of Lect.
  Notes Comput. Sci., 2019, pp. 373--391.

\bibitem{Tamura}
N.~Tamura, A linear logic prover (llprover),
  http://bach.istc.kobe-u.ac.jp/llprover/ (1998--2007).

\bibitem{Jipsenprover}
P.~Jipsen, Deciding equations in residuated lattices,
  http://www1.chapman.edu/{\textasciitilde}jipsen/reslat/.

\bibitem{OkadaTerui}
M.~Okada, K.~Terui, The finite model property for various fragments of
  intuitionistic linear logic, J. Symb. Log. 64~(2) (1999) 790--802.

\bibitem{PentusJoLLI}
M.~Pentus, The conjoinability relation in {L}ambek calculus and linear logic,
  J. Log. Lang. Inform. 3 (1994) 121--140.

\bibitem{Andreka2011TCS}
H.~Andr\'{e}ka, S.~Mikul\'{a}s, I.~N\'{e}meti, The equational theory of
  {K}leene lattices, Theoretical Computer Science 412~(52) (2011) 7099--7108.

\bibitem{Andreka2011AlgUniv}
H.~Andr\'{e}ka, S.~Mikul\'{a}s, Axiomatizability of positive algebras of binary
  relations, Algebra Universalis 66 (2011) 7--34.

\bibitem{Andreka1994LIF}
H.~Andr\'{e}ka, I.~N\'{e}meti, I.~Sain, Some new landmarks on the roadmap of
  two dimensional logics, in: J.~van Eijck, A.~Visser (Eds.), Logic and
  Information Flow, MIT Press, 1994, pp. 163--169.

\bibitem{Kurucz1993}
A.~Kurucz, I.~N\'{e}meti, I.~Sain, A.~Simon, Undecidable varieties of
  semilattice-ordered semigroups, of {B}oolean algebras with operators, and
  logics extending {L}ambek calculus, Bulletin of the IGPL 1~(1) (1993) 91--98.

\bibitem{KuznJANCL}
S.~L. Kuznetsov, Trivalent logics arising from {L}-models for the {L}ambek
  calculus with constants, Journal of Applied Non-Classical Logics 24~(1--2)
  (2014) 132--137.

\bibitem{Kanovich1995Minsky}
M.~Kanovich, The direct simulation of {M}insky machines in linear logic, in:
  Advances in Linear Logic, Vol. 222 of London Mathematical Society Lecture
  Notes, Cambridge University Press, 1995, pp. 123--145.

\bibitem{Lafont}
Y.~Lafont, The undecidability of second order linear logic without
  exponentials, Journal of Symbolic Logic 61~(2) (1996) 541--548.

\bibitem{LafontScedrov}
Y.~Lafont, A.~Scedrov, The undecidability of second order multiplicative linear
  logic, Information and Computation 125~(1) (1996) 46--51.

\bibitem{Minsky1961}
M.~L. Minsky, Recursive unsolvability of {P}ost's problem of {``Tag''} and
  other topics in theory of {T}uring machines, Annals of Mathematics 74~(3)
  (1961) 437--455.

\bibitem{AhoUllman}
A.~V. Aho, J.~D. Ullman, The theory of parsing, translation, and compiling.
  {Vol. I: Parsing}, Prentice-Hall, 1972.

\bibitem{Buszkowski1996}
W.~Buszkowski, The finite model property for {BCI} and related systems, Studia
  Logica 57 (1996) 303--323.

\bibitem{Buszko2006RelMiCS}
W.~Buszkowski, On the complexity of the equational theory of relational action
  algebras, in: RelMiCS 2006: Relations and Kleene Algebra in Computer Science,
  Vol. 4136 of LNCS, Springer, 2006, pp. 106--119.

\end{thebibliography}

 \end{document}